\author{Nikola Kamburov}
\address{Nikola Kamburov, Facultad de Matem\'aticas, Pontificia Universidad Cat\'olica de Chile, Avenida Vicu\~na Mackenna 4860, Santiago 7820436, Chile}
\email{nikamburov@mat.uc.cl}
\author{Luciano Sciaraffia}
\address{Luciano Sciaraffia, Facultad de Matem\'aticas, Pontificia Universidad Cat\'olica de Chile, Avenida Vicu\~na Mackenna 4860, Santiago 7820436, Chile}
\email{lvsciaraffia@uc.cl}
\thanks{The first author (NK) was partially supported by Proyecto FONDECYT Iniciaci\'on No. 11160981.}
\title[Nontrivial solutions to Serrin's problem]{Nontrivial solutions to Serrin's problem in annular domains}
\keywords{overdetermined elliptic problem, bifurcation methods, eigenvalues, Cheeger problem}	
\subjclass[2010]{35N25, 37G25, 47A75, 49Q10}
\newtheorem{theorem}{Theorem}[section]
\newtheorem{prop}[theorem]{Proposition}
\newtheorem{lemma}[theorem]{Lemma}
\newtheorem{corollary}[theorem]{Corollary}
\newtheorem{remark}{Remark}[section]
\numberwithin{equation}{section}
\def\R{\mathbb{R}}
\def\Z{\mathbb{Z}}
\def\N{\mathbb{N}}
\def\S{\mathbb{S}}
\def\e{\varepsilon}
\def\m{\mu}
\def\n{\nu}
\def\G{\Gamma}
\def\D{\Delta}
\def\O{\Omega}
\def\a{\alpha}
\def\g{\gamma}
\def\d{\delta}
\def\s{\sigma}
\def\th{\theta}
\def\f{\phi}
\def\l{\lambda}
\def\F{\Phi}
\def\o{\omega}
\def\p{\partial}
\def\disp{\displaystyle}
\def\id{\mathrm{id} \,}
\def\im{\mathrm{im} \,}
\renewcommand{\vec}{\mathbf}
\def\de{\partial}
\def\XXint#1#2#3{{\setbox0=\hbox{$#1{#2#3}{\int}$ }
\vcenter{\hbox{$#2#3$ }}\kern-.6\wd0}}
\newcommand{\Nz}{\mathbb{N}_0}
\begin{document}

\begin{abstract}
	We construct nontrivial smooth bounded domains $\O \subseteq \R^n$ of the form $\O_0 \setminus \overline{\O}_1$, bifurcating from annuli, for which there exists a positive solution to the overdetermined boundary value problem 
	$$\begin{cases}
        -\D u = 1 ,\; u > 0  & \text{ in } \O, \\
        u = 0 ,\; \p_\n u = \text{const} & \text{ on } \p\O_0, \\
        u = \text{const} ,\; \p_\n u = \text{const} & \text{ on } \p\O_1,
    \end{cases}$$
	where $\n$ stands for the inner unit normal to $\p\O$. From results by Reichel \cite{reichel1995radial} and later by Sirakov \cite{sirakov2001symmetry}, it is known that the condition $\p_\n u \leq 0$ on $\p\O_1$ is sufficient for rigidity to hold, namely, the only domains which admit such a solution are annuli and solutions are radially symmetric. Our construction shows that the condition is also necessary. Furthemore, we show that the constructed domains are self-Cheeger.

\end{abstract}

\bibliographystyle{alpha}

\maketitle

\section{Introduction and main result}\label{intro}

Let $\O\subseteq \R^n$ be a bounded, \emph{connected} $C^2$-domain of the form $\O = \O_0\setminus \overline{\O}_1$, where $\O_0$ and $\O_1$ are bounded domains in $\R^n$, $n\geq 2$, with $\O_1\Subset \O_0$. The present paper is devoted to the overtermined boundary value problem
\begin{equation}\label{eq:prob1}
\begin{cases}
    -\D u = 1   & \text{ in } \O , \\
    u = 0 ,\; \p_\n u = c_0 & \text{ on } \p\O_0 , \\
    u = a ,\; \p_\n u = c_1 & \text{ on } \p\O_1 ,
\end{cases}
\end{equation}
where $\n$ denotes the \emph{inner} unit normal to $\O$ and $a\geq 0$, $c_0$ and $c_1$ are real constants. Note that whenever \eqref{eq:prob1} admits a solution $u\in C^2(\overline{\O})$, then $u$ is strictly positive in $\O$ due the strong maximum principle, while the Hopf lemma implies that the constant $c_0>0$.

Problem \eqref{eq:prob1} arises in classical models in the theory of elasticity, fluid mechanics and electrostatics -- see \cite{sirakov2002overdetermined} for a discussion of applications. The special case in which $a=0$ and $c_0=c_1$ was treated by Serrin in his seminal 1971 paper \cite{serrin1971symmetry}. He showed that a strong property of \emph{rigidity} was forced upon any solution $u\in C^2(\overline{\O})$ and upon the shape of the domain $\O$ supporting it: namely, $\O$ has to be a ball ($\O_1 = \emptyset$) and $u$ has to be radially symmetric and monotonically decreasing along the radius. Serrin's proof is based on the \emph{moving planes method},  pioneered earlier by Alexandrov \cite{alexandrov1962characteristic} in a geometric context, and has ever since been a powerful tool for establishing symmetry of positive solutions to second-order elliptic problems. An important artifact of the method is that proving radial symmetry comes hand in hand with proving the monotonicity of solutions in the radial direction.

Reichel \cite{reichel1995radial} adapted Serrin's method to analyze \eqref{eq:prob1} in the case when $$u|_{\de \O_1} = a>0 \quad\text{and}\quad u_{\nu}|_{\de \O_1}=c_1\leq 0.$$ 
Under the additional assumption that $0<u<a$ in $\O$, he showed that $u$ has to be radially symmetric and the domain $\O$ -- a standard annulus. Several years later, Sirakov \cite{sirakov2001symmetry} removed the extra assumption and proved a more general rigidity theorem, allowing for separate constant Dirichlet conditions $u|_{\g_i} = a_i>0$ and separate constant Neumann conditions $u_{\nu}|_{\g_i}=c_i\leq 0$ to be imposed on each connected component $\g_i$ of the inner boundary $\de \O_1$, $i=1,\ldots, l$. The assumption of non-positivity of each Neumann condition $u_{\nu}|_{\g_i}=c_i\leq 0$ is crucial for the moving planes method to run and yield the radial symmetry and monotonicity of solutions. Furthemore, just like Serrin's result, the rigidity theorems by Reichel and Sirakov apply to a more general class of second order elliptic equations of which \eqref{eq:prob1} in an important special case. In $n=2$ dimensions, a symmetry result for \eqref{eq:prob1} was obtained earlier by Willms, Gladwell and Siegel \cite{willms1994symmetry} under some additional boundary curvature assumptions. See also \cite{khavinson2005overdetermined} for a complex analytic approach to \eqref{eq:prob1} when $n=2$.

In this paper we focus on a case, in which the Neumann condition on the \emph{inner} boundary is positive:
\begin{equation}\label{BC}
u|_{\de \O_0} = 0, \quad u|_{\de \O_1} = a>0 \quad\text{and} \quad u_{\nu}|_{\de \O_0}= u_{\nu}|_{\de \O_1}=c>0.
\end{equation}
We immediately notice that over annuli $\O$, \eqref{eq:prob1} now possesses radial solutions that are \emph{not monotone} along the radius -- see Lemma \ref{trivialsol} for the formula of a one-parameter family of such examples. Their presence hints that proving radial symmetry for solutions of \eqref{eq:prob1} would be out of the scope of the moving planes method. Thus, one is led to conjecture that, under \eqref{BC}, \emph{radial rigidity does not hold} for solutions of \eqref{eq:prob1}.

Our main result confirms that this is indeed the case. 

\begin{theorem}\label{teo1}
    There exist smooth bounded annular domains of the form $\O=\O_0 \setminus \overline{\O}_1 \subseteq \R^n$, which are different from standard annuli, and positive constants $a$ and $c$, for which the overdetermined problem \eqref{eq:prob1} admits a solution $u \in C^{\infty}(\overline{\O})$ satisfying \eqref{BC}.
\end{theorem}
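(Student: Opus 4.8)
The plan is to construct the domains of Theorem~\ref{teo1} as a branch of nontrivial solutions bifurcating from the family of radial solutions over standard annuli, by a Crandall--Rabinowitz argument. The scaling $x\mapsto \l x$, $u\mapsto \l^2 u(\cdot/\l)$ preserves $-\D u=1$, so we may normalize the outer boundary to be the unit sphere $\S^{n-1}$. By Lemma~\ref{trivialsol}, for each ratio $s$ in a suitable subinterval of $(0,1)$ the standard annulus $A_s=\{s<|x|<1\}$ carries a radial solution $u_s$ of \eqref{eq:prob1}--\eqref{BC} with $a=a(s)>0$ and $c=c(s)>0$ depending smoothly on $s$; this one-parameter family is the trivial branch. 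Its members fail to be monotone in $r$, which both opens the door to bifurcation and, by the results of Reichel and Sirakov recalled above, makes it unavoidable.

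For the functional setting, fix $\a\in(0,1)$ and let $X$ be the space of pairs $(\f_0,\f_1)$ of axially symmetric, zero-mean $C^{2,\a}$ functions on $\S^{n-1}$ that are in addition even under the equatorial reflection $x_n\mapsto -x_n$ (this last restriction discards translations and all odd spherical-harmonic modes). For $(\f_0,\f_1)\in X$ small, let $\O^{\f_0,\f_1}$ be the annular domain bounded outside by $\G_0=\{(1+\f_0(\th))\th:\th\in\S^{n-1}\}$ and inside by $\G_1=\{(s+\f_1(\th))\th:\th\in\S^{n-1}\}$, and for $(a,c)$ near $(a(s),c(s))$ let $u=u[\f_0,\f_1,a,c,s]$ solve the Dirichlet problem $-\D u=1$ in $\O^{\f_0,\f_1}$, $u=0$ on $\G_0$, $u=a$ on $\G_1$. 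Pulled back to the fixed annulus $A_s$, this $u$ depends analytically on all its arguments (Schauder theory together with analyticity of the pullback/composition operators). Define
\[
\mathcal{F}(\f_0,\f_1,a,c,s)=\bigl(\,(\p_\n u-c)\big|_{\G_0}\,,\ (\p_\n u-c)\big|_{\G_1}\,\bigr)\in C^{1,\a}(\S^{n-1})^2,
\]
restricted to the appropriate symmetry class; a small zero of $\mathcal{F}$ is exactly a solution of \eqref{eq:prob1}--\eqref{BC} over a domain close to $A_s$, and $\mathcal{F}(0,0,a(s),c(s),s)=0$.

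The crux is the linearization $\mathcal{L}_s=D_{(\f_0,\f_1,a,c)}\mathcal{F}(0,0,a(s),c(s),s)$. By the standard Hadamard shape-derivative computation, the linearized state $\dot u$ is harmonic on $A_s$ with Dirichlet data read off from $\f_0$, $\f_1$ and the variation $\dot a$, while the linearized Neumann traces acquire in addition boundary-motion terms built from $\p_r u_s$, $\p_{rr}u_s$ and the constant mean curvature of the two spheres. Expanding in spherical harmonics makes $\mathcal{L}_s$ block-diagonal: on the constant mode it is the $2\times 2$ matrix $\left(\begin{smallmatrix} p_0 & -1\\ p_1 & -1\end{smallmatrix}\right)$ with $p_0\neq p_1$ (the same nondegeneracy that makes $a(s)$ well defined), hence invertible; on the degree-$k$ mode it is a $2\times 2$ matrix $M_k(s)$ with entries explicit in $n$, $k$, $s$ and the radial data, obtained by fitting the two-dimensional space of homogeneous harmonics to the two boundary conditions. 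The decisive step is to exhibit a value $s_*\in(0,1)$ and an even integer $k_0\geq 2$ with $\det M_{k_0}(s_*)=0$, $M_k(s_*)$ invertible for every other relevant mode $k$, and the transversality $\tfrac{d}{ds}\det M_{k_0}(s)\big|_{s=s_*}\neq 0$. I expect this to be the principal obstacle: it amounts to controlling the sign changes of the explicit but cumbersome function $s\mapsto\det M_k(s)$ — for instance through its behaviour as $s\to 0^+$, $s\to1^-$ or $k\to\infty$ — together with a perturbation of $s_*$ to dodge the finitely many resonant radii of the other modes. Absent such a degeneracy the implicit function theorem would keep every nearby solution radial.

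Granting the existence of $(s_*,k_0)$, the operator $\mathcal{L}_{s_*}$ is Fredholm of index $0$ with one-dimensional kernel (spanned by the even degree-$k_0$ axial harmonic tensored with a generator of $\ker M_{k_0}(s_*)$) and the transversality condition holds, so the Crandall--Rabinowitz theorem produces an analytic curve $t\mapsto(\f_0(t),\f_1(t),a(t),c(t),s(t))$ of zeros of $\mathcal{F}$ issuing from $(0,0,a(s_*),c(s_*),s_*)$ with $(\dot\f_0(0),\dot\f_1(0))\neq 0$ of pure degree $k_0$. For small $t\neq0$ the domain $\O^{\f_0(t),\f_1(t)}$ has non-spherical boundaries — so it is neither a standard annulus nor a dilate or translate of one — while $\O_1^{\f_1(t)}\Subset\O_0^{\f_0(t)}$ since this is an open condition at $t=0$; by elliptic regularity together with the analyticity of the branch, the domain and the solution $u$ are $C^\infty$ up to the boundary; positivity of $u$ is automatic as noted in the introduction, and $a(t),c(t)>0$ by continuity. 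This yields the domains and solutions asserted in Theorem~\ref{teo1}.
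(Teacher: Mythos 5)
Your overall strategy coincides with the paper's: normalize the outer radius to one, treat the radial solutions of Lemma \ref{trivialsol} as the trivial branch, perturb both boundary spheres by graphs, reduce the overdetermined problem to the zero set of a Neumann-defect operator, block-diagonalize its linearization over spherical harmonics into $2\times 2$ mode matrices, impose an $O(n-1)\times\Z_2$-type symmetry to eliminate translations and repeated harmonics, and invoke Crandall--Rabinowitz. The genuine gap is that the step you yourself call ``the principal obstacle'' is precisely the mathematical content of the theorem, and you leave it unproved. The paper does not locate $(s_*,k_0)$ by perturbing away from ``finitely many resonant radii'': it computes the mode matrices $M_{\l,k}$ explicitly (Lemma \ref{invariance}), proves that for each $k\geq 2$ the smaller eigenvalue $\mu_{k,1}(\l)$ is strictly decreasing in $\l$, tends to $k-1>0$ as $\l\to0$ and to $-\infty$ as $\l\to1$ (Lemma \ref{eigenvalsdecrease}, via delicate hyperbolic-trigonometric estimates), and that both eigenvalues are strictly increasing in $k$ because $\p_k M_{\l,k}$ is positive definite (Lemma \ref{eigenvalsincrease}); together with $\mu_{k,2}(\l)>0$ for $k\geq2$ and negativity of the $k=0$ eigenvalues (Proposition \ref{prop:collectmono}), this yields for every $k\geq2$ a unique crossing $\l_k^*$, the simplicity of the kernel at $\l_k^*$ within the symmetric class (infinitely many other modes must be controlled simultaneously, not ``dodged'' by a small shift of $s_*$), and the transversality condition, which is exactly $\mu_{i_k,1}'(\l_k)\neq0$ and again comes from the monotonicity lemma rather than from the mere existence of a zero of $\det M_{k_0}$. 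Writing ``granting the existence of $(s_*,k_0)$'' therefore grants the theorem; what you have established is only the (correct) functional-analytic framework.

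Two further points. Your smoothness claim is not justified as stated: the bifurcation curve lives in $C^{2,\a}(\S^{n-1})$, and analyticity of $t\mapsto(\f_0(t),\f_1(t))$ in the Banach-space sense gives no smoothness of $\f_0(t),\f_1(t)$ as functions on the sphere, while Schauder theory on a $C^{2,\a}$ domain gives only $u\in C^{2,\a}(\overline{\O})$; the paper upgrades regularity via the Kinderlehrer--Nirenberg theorem \cite{KindNiren}, which exploits the fact that both the Dirichlet and the Neumann data are constant on each boundary component and yields an analytic free boundary and $u\in C^{\infty}(\overline{\O})$. Also, enlarging the unknowns to $(\f_0,\f_1,a,c)$ while restricting to zero-mean boundary perturbations obliges you to check that the constant-mode block in $(\dot a,\dot c)$ is invertible and that the operator remains Fredholm of index zero between your chosen spaces; the paper sidesteps this by fixing $a=a_\l$, $c=c_\l$ from Lemma \ref{trivialsol} and treating only the two boundary graphs as unknowns, with $\l$ as the bifurcation parameter. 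These issues are fixable, but the spectral analysis described above is not optional.
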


We construct these nontrivial annular domains and their corresponding solutions by the means of the Crandall-Rabinowitz Bifurcation Theorem \cite{crandall1971bifurcation}. In this manner, we actually obtain a smooth branch of domains and solutions (in fact, a whole sequence of distinct branches) bifurcating from the trivial branch of standard annuli admitting the radial, non-monotone solutions of \eqref{eq:prob1}, mentioned above. This is more precisely stated in the body of Theorem \ref{teo2} in Section \ref{prelim}, of which Theorem \ref{teo1} is an immediate corollary.

The overdetermined problem \eqref{eq:prob1} with boundary conditions \eqref{BC} has a connection to the so-called \emph{Cheeger problem}: given a bounded domain $\O\subseteq \R^n$, find 
\begin{equation}\label{eq:cheeger}
h_1(\O):=\inf\{P(E)/|E|: E\subseteq \O\}
\end{equation} 
where $|E|$ is the Lebesgue measure of $E$ and $P(E)$ denotes the perimeter functional (see \cite{giusti1984}). The constant $h_1(\O)$ is known as the \emph{Cheeger constant} for the domain $\O$, and a subset $E\subseteq \O$, for which the infimum in \eqref{eq:cheeger} is attained, is called a \emph{Cheeger set} for $\O$. See the surveys \cite{parini2011, leonardi2015} for an overview of the Cheeger problem and a discussion of applications. It turns out that the domains $\O$ constructed in Theorem \ref{teo1} are precisely their own Cheeger sets. Such domains are called \emph{self-Cheeger}.

\begin{corollary}\label{coro:cheeger} Let $\O$ be any one of the smooth annular domains in Theorem \ref{teo1} that admits a solution $u\in C^{\infty}(\overline{\O})$ of \eqref{eq:prob1}-\eqref{BC}. Then
\[
h_1(\O) =\frac{P(\O)}{|\O|} =1/c 
\]
and $\O$ is the unique minimizer of the Cheeger problem \eqref{eq:cheeger}.
\end{corollary}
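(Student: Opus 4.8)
The plan is to exploit the fact that, by the boundary conditions \eqref{BC}, the solution $u$ has constant inner normal derivative $\p_\nu u = c > 0$ on the \emph{entire} boundary $\p\O = \p\O_0 \cup \p\O_1$, together with $-\D u = 1$ in $\O$. First I would recall the standard characterization of self-Cheeger sets: a bounded domain $\O$ with $C^2$ boundary is self-Cheeger, with $h_1(\O) = P(\O)/|\O|$, precisely when $\p\O$ has constant (distributional) mean curvature equal to $h_1(\O)/(n-1)$ in the appropriate sense, or — more conveniently here — when there is a vector field on $\O$ whose divergence is constant and whose normal trace on $\p\O$ has the right sign and size. The natural candidate is $X = -\nabla u$: then $\operatorname{div} X = -\D u = 1$ in $\O$, while on $\p\O$ we have $X\cdot\nu_{\mathrm{out}} = -\nabla u\cdot(-\nu) = \p_\nu u = c$, where $\nu$ is the inner normal. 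Thus $X$ is an admissible ``calibration'' for the constant $1/c$.

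Next I would turn this into the two required inequalities. For any measurable $E \subseteq \O$ of finite perimeter, integrating $\operatorname{div} X = 1$ over $E$ and using the Gauss–Green formula for sets of finite perimeter gives
\[
|E| = \int_E \operatorname{div} X = \int_{\p^* E} X\cdot \nu_E \, d\mathcal{H}^{n-1} \le \|X\|_{L^\infty(\O)} \, P(E).
\]
The key point is the bound $\|\nabla u\|_{L^\infty(\overline\O)} \le c$. I would establish this via the maximum principle applied to $|\nabla u|^2$: since $-\D u = 1$, a computation gives $\D(|\nabla u|^2) = 2|D^2 u|^2 - 2\nabla u\cdot\nabla\D u = 2|D^2 u|^2 \ge 0$, so $|\nabla u|^2$ is subharmonic and attains its maximum on $\p\O$. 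On $\p\O_0$, since $u=0$ there, the tangential gradient vanishes and $|\nabla u| = |\p_\nu u| = c$; on $\p\O_1$, since $u\equiv a$ is constant there, again the tangential gradient vanishes and $|\nabla u| = c$. Hence $|\nabla u| \le c$ throughout $\overline\O$, giving $|E| \le c\,P(E)$, i.e. $P(E)/|E| \ge 1/c$ for every competitor, so $h_1(\O) \ge 1/c$. Taking $E = \O$ in the same Gauss–Green identity gives $|\O| = \int_{\p\O} X\cdot\nu_{\O}\,d\mathcal{H}^{n-1} = c\,P(\O)$ exactly, so $P(\O)/|\O| = 1/c$ and therefore $h_1(\O) = 1/c$ with $\O$ itself a minimizer.

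Finally, for uniqueness I would argue that equality $P(E)/|E| = 1/c$ forces $|\nabla u| = c$ $\mathcal{H}^{n-1}$-a.e. on $\p^* E$ \emph{and} $X\cdot\nu_E = |X|$ there, i.e. $\nabla u$ is (anti)parallel to the inner normal along $\p^* E$. Since $|\nabla u| < c$ strictly in the interior $\O$ — because $|\nabla u|^2$ is subharmonic and non-constant (it equals $c^2$ on the boundary but $u$ is not affine, as $-\D u=1$), hence by the strong maximum principle $|\nabla u|^2 < c^2$ in the open set $\O$ — the reduced boundary $\p^* E$ cannot meet $\O$ except on a set of vanishing $\mathcal{H}^{n-1}$-measure; combined with $E\subseteq\O$ this pins down $E = \O$ up to null sets. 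The main obstacle I anticipate is making this last uniqueness step fully rigorous in the $BV$/sets-of-finite-perimeter setting: one must handle the possibility that $\p^* E$ touches $\p\O_0$ and $\p\O_1$ simultaneously and rule out ``partial'' Cheeger sets that coincide with a neighborhood of one boundary component; here the strict interior bound $|\nabla u| < c$ together with the connectedness of $\O$ and a careful decomposition of $\p^* E$ into its portion in $\O$ and its portion on $\p\O$ should close the argument, but the details require some care with the relative perimeter and the trace of $u$.
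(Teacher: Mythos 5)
Your proposal is correct and follows essentially the same route as the paper: subharmonicity of $|\nabla u|^2$ plus the strong maximum principle to get the strict bound $|\nabla u|<c$ in $\O$, then the Gauss--Green formula over sets of finite perimeter (De Giorgi) applied to $-\D u=1$ to obtain $|E|\le c\,P(E)$, with equality exactly for $E=\O$. The uniqueness step you worry about is handled the same way in the paper (indeed more tersely): since equality forces $\nabla u\cdot\nu_E=c$ $\mathcal{H}^{n-1}$-a.e.\ on $\p^*E$ while $|\nabla u|<c$ inside $\O$, the reduced boundary is confined to $\p\O$, and connectedness of $\O$ gives $E=\O$ up to null sets.
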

In this manner, Corollary \ref{coro:cheeger} establishes the existence of non-radially symmetric domains that are self-Cheeger.

Constructions of non-trivial solutions to overdetermined elliptic problems have been prominent in the literature in recent years. Many of them have been driven by a famous conjecture of Berestycki, Caffarelli and Nirenberg \cite{berestycki1997monotonicity}, according to which, if $f$ is a Lipschitz function and $\O\subseteq \R^n$ is an unbounded smooth domain, such that $\R^n \setminus \overline{\O}$ is connected, then the overdetermined problem
\begin{equation}\label{exterioroverdetermined}
\begin{cases}
    \D u + f(u) = 0 ,\; u > 0  & \text{ in } \O, \\
    u = 0 ,\; \p_\n u = \text{const} & \text{ on } \p\O,
\end{cases}
\end{equation}
admits a positive bounded solution if and only if $\O$ is a half space, a cylinder or the complement of a ball. First,  Sicbaldi \cite{sicbaldi2010new} constructed domain counterexamples $\O$ to the conjecture when $n\geq 3$ and $f(u)=\l u$, which bifurcate from cylinders $B^{n-1}\times \R$ for appropriate $\l>0$. Then  Ros, Ruiz and Sicbaldi  \cite{ros2016solutions} constructed a different set of counterexamples for all dimensions $n\geq 2$ and $f(u) = u^p-u$, $p>1$, that bifurcate from the complement of a ball $\R^n\setminus \overline{B_{\l}^n}$. The main tool behind the two results is a bifurcation theorem by Krasnoselski (see \cite{kielhofer2011bifurcation}) that is based on topological degree theory and which yields a sequence of domains $\O$, rather than a smooth branch. Schlenk and Sicbaldi \cite{schlenk2012bifurcating} managed to strengthen the construction in \cite{sicbaldi2010new} through the use of the Crandall-Rabinowitz Bifurcation Theorem to obtain a smooth branch of perturbed cylinders $\O$. A similar approach leading to perturbed generalized cylinders was pursued by Fall, Minlend and Weth \cite{fall2017unbounded} in the case of our interest, $f\equiv 1$. The bifurcation method has also been successful in finding nontrivial solutions in versions of \eqref{exterioroverdetermined} set in Riemannian manifolds \cite{morabito2016delaunay, fall2018serrin}.  For other methods of solution construction in overdetermined elliptic problems, we refer to \cite{pacard2009extremal, DS, HHP, kamburov2013free, KhavLundTeo, Traizet, del2015serrin, delay2015extremal, fall2015serrin, LWW, JP} among others.

Our approach to Theorem \ref{teo1} is aligned with that of Schlenk and Sicbaldi in \cite{schlenk2012bifurcating} and Fall, Minlend and Weth in \cite{fall2017unbounded, fall2018serrin}: we translate the problem to a non-linear, non-local operator equation in appropriate function spaces and we derive the necessary spectral and Fredholm tranversality properties of the linearized operators in order to implement the Crandall-Rabinowitz Bifurcation Theorem. However, unlike the quoted results above where symmetry considerations allow the authors to perturb all connected boundary components of the underlying domains in the same symmetric fashion, we are bound by the geometry of the standard annulus \mbox{$\O_{\l}=\{x\in \R^n: \l <|x|<1\}$} to deform its two \emph{non-symmetric} boundary components differently. Thus, the function spaces that we work in are necessarily product spaces of two factors that correspond to the two separate connected components of $\de \O_{\l}$. This is a new feature, and as far as we are aware, our treatment is the first one in this line of results that deals with it. 

The paper is organized as follows. In the next section we outline the strategy of the construction leading up to the statement of Theorem \ref{teo2}, which refines Theorem \ref{teo1}, and we show how the latter follows from the former. In Section \ref{probset} we set up the problem as an operator equation $F_{\l}(\vec{v})=\vec{0}$, where $F_\l: U \subseteq (C^{2,\a}(\S^{n-1}))^2 \to (C^{1,\a}(\S^{n-1}))^2$, and compute a formula for its linearization $L_{\l}:=D_{\vec{v}} F_\l|_{\vec{v}=\vec{0}}$ in terms of the Dirichlet-to-Neumann operator for the Laplacian in $\O_\l$ (Proposition \ref{prop:linearization}). In Section \ref{opspec} we study the spectrum of $L_{\l}$: we find that for each $k\in \Nz=\N\cup\{0\}$, $L_{\l}$ has \emph{two} different eigenvalue branches $\mu_{k,1}(\l)<\mu_{k,2}(\l)$ with associated eigenvectors in the subspaces $\R \mathcal{Y}_k \oplus \R \mathcal{Y}_k$, where $\mathcal{Y}_k$ is any spherical harmonic of degree $k$. In Lemmas \ref{eigenvalsdecrease} and \ref{eigenvalsincrease} we establish key monotonicity properties for $\mu_{k,1}(\l)$ and $\mu_{k,2}(\l)$ in both $\l$ and $k$, from which we infer that for $k\geq 2$ the first eigenvalue branch $\m_{k,1}(\l)$ is strictly decreasing in $\l\in (0,1)$, changing sign once, while the second $\m_{k,2}(\l)>0$ is always positive. In Section \ref{proofcomp} we restrict the operators to pairs of $G$-invariant functions on the sphere for an appropriate group of isometries $G$ so as to ensure that, whenever $0$ is an eigenvalue of the restricted $L_{\l}$, it is simple. We then verify the relevant Fredholm mapping properties (Proposition \ref{verificationLlambda}), necessary to apply the Crandall-Rabinowitz Bifurcation Theorem and complete the proof of Theorem \ref{teo2}. Finally, in Section \ref{sec:coro} we provide the proof of Corollary \ref{coro:cheeger}.

\section*{Acknowledgements.}
The authors would like to thank Dimiter Vassilev for bringing up Serrin's problem in annular domains to their attention.

\section{Outline of strategy and refinement of the main theorem}\label{prelim}

Let us first introduce some notation. For any $\lambda \in (0,1)$ we denote the standard annulus of inner radius $\l$ and outer radius $1$ by
\begin{align*}
    \O_\l & := \left\{ x \in \R^n: \l < |x| < 1 \right\}  
\end{align*}
and let its two boundary components be
\begin{align*}
         \G_1 & := \left\{ x \in \R^n: |x| = 1 \right\} = \S^{n-1} , \\
    \G_\l & := \left\{ x \in \R^n: |x| = \l \right\} = \l \S^{n-1} ,
\end{align*}
where $\S^{n-1}$ is the unit sphere in $\R^n$, centered at the origin. 

We will construct the nontrivial solutions $u$ and domains $\O$ solving \eqref{eq:prob1}-\eqref{BC} by bifurcating away, at certain critical values of the bifurcation parameter $\l$, from the branch of non-monotone radial solutions $u_{\l}$ of \eqref{eq:prob1} defined on the annuli $\O_\l$, for which $\de_\nu u_{\l}=c_\l$ is the same constant on all of $\de \O_\l$. We describe this branch of solutions explicitly in the lemma below. 

\begin{lemma}\label{trivialsol}
    For each $\l \in (0,1)$, there exist unique positive values $a_\l$ and $c_\l$ given by
    \begin{align}\label{eq:bdrydata}
        a_\l & = \begin{cases} \disp \frac{1}{2} \l \log\l + \frac{1}{4} \left( 1-\l^2 \right) & \text{ if } n=2, \\ \disp \frac{1}{n}\l \frac{\l^{n-2}-1}{n-2}\frac{1+\l}{1+\l^{n-1}} + \frac{1}{2n} \left( 1-\l^2 \right) & \text{ if } n \geq 3, \end{cases} \\
        c_\l & = \frac{1}{n}\frac{1-\l^{n}}{1+\l^{n-1}} 
    \end{align}
    such that for $\O = \O_\lambda$ the problem \eqref{eq:prob1} has a unique positive solution $u=u_\l$ with boundary conditions
    $$ u = 0 \text{ on } \G_1, \qquad u = a_\l \text{ on } \G_\l, \qquad \frac{\p u}{\p\n} = c_\l \text{ on } \p\O_\l. $$
    The solution is radially symmetric and given by
    \begin{equation}\label{eq:trivialsol}
        u_\l(x) =
            \begin{cases}
                \disp \frac{1}{2}\l\log |x| + \frac{1}{4} \left( 1 -|x|^2 \right), & \text{ if } n = 2, \\
                \disp \frac{\lambda^{n-1}}{n(n-2)}\frac{1+\lambda}{1+\lambda^{n-1}} \left( 1-|x|^{2-n} \right) + \frac{1}{2n} \left( 1-|x|^2 \right) & \text{ if } n \geq 3.
            \end{cases}
    \end{equation}
    
    \begin{proof}
        If $u=u(r)$, where $r=|x|$, is a radially symmetric solution to \eqref{eq:prob1}, then $u$ satisfies the ODE        $$u'' + \frac{n-1}{r}u' = -1 ,$$
        where the prime denotes differentiation with respect to $r$. Then simple integration yields
        \begin{equation}\label{eq:firstint}
            u'(r) = \frac{C}{r^{n-1}} -\frac{r}{n}.
        \end{equation}
        Solving $-u'(1) = u'(\l)$ for $C$, we obtain $$C = \frac{1+\l}{n(1+\l^{1-n})}.$$ The formulas \eqref{eq:bdrydata}-\eqref{eq:trivialsol} for $a_\l$, $c_\l$ and $u_{\l}$ follow by integrating \eqref{eq:firstint} once again and setting $u(1)=0$. It remains to confirm that $a_\l>0$ when $\l\in(0,1)$. When $n=2$, this follows from the observation that 
        $$
      \lim_{\l \to 1^-}\frac{da_\l}{d\l}= 0 = \lim_{\l \to 1^-} a_\l \quad \text{and} \quad \frac{d^2 a_{\l}}{d\l^2}= (\l^{-1} - 1)/2 >0 \quad \text{for }  \l\in(0,1).
        $$ 
For $n\geq 3$, we can rewrite the expression \eqref{eq:bdrydata} for $a_{\l}$ as
        \[
        a_\l = \frac{1+\l}{2n(n-2)(1+\l^{n-1})}g(\l) \quad \text{where} \quad g(\l) = (n-2)- n\l - (n-2)\l^n +n\l^{n-1}.
        \]
       Then the positivity of $g(\l)$, and thus of $a_\l$, over $\l\in (0,1)$, follows from the fact that 
        $$\lim_{\l \to 1^-}\frac{dg(\l)}{d\l}= 0 = \lim_{\l \to 1^-} g(\l)  \quad \text{and} \quad \frac{d^2 g(\l)}{d\l^2}= n(n-1)(n-2)\l^{n-3} (1-\l)>0 \quad \text{for } \l\in(0,1).$$
    \end{proof}
\end{lemma}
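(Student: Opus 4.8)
The plan is to exploit the rotational symmetry of $\O_\l$ and look for a radial solution $u=u(r)$, $r=|x|$. Then $-\D u = 1$ reduces to the linear ODE $u'' + \frac{n-1}{r}u' = -1$, equivalently $(r^{n-1}u')' = -r^{n-1}$, so one integration yields $u'(r) = C r^{1-n} - r/n$ with a free constant $C$, and a second integration introduces a constant $D$. There are exactly two pieces of data that do not involve the unknowns $a_\l$ and $c_\l$ and that must therefore pin down $C$ and $D$: the Dirichlet condition $u(1)=0$ on $\G_1$, and the requirement that $\p_\nu u$ be \emph{the same} constant on both components of $\p\O_\l$. Here I would be careful with the orientation of the inner normal $\nu$: on the outer sphere $\G_1$ it equals $-x/|x|$, so $\p_\nu u = -u'(1)$, whereas on the inner sphere $\G_\l$ it points away from the removed ball, $\nu = +x/|x|$, so $\p_\nu u = u'(\l)$. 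Imposing $-u'(1) = u'(\l)$ gives a linear equation with solution $C = \frac{1+\l}{n(1+\l^{1-n})}$, and $u(1)=0$ then fixes $D$; substituting back produces the closed form \eqref{eq:trivialsol}, after which $c_\l := -u_\l'(1) = u_\l'(\l)$ and $a_\l := u_\l(\l)$ are obtained by direct computation, yielding \eqref{eq:bdrydata}. Positivity of $u_\l$ in the interior is then automatic, since $u_\l$ is superharmonic with nonnegative boundary values and the strong maximum principle applies. Uniqueness within the radial class is clear from the derivation, and uniqueness among all positive solutions follows because, the Dirichlet data $(0,a_\l)$ being fixed, the solution of $-\D u = 1$ in $\O_\l$ is unique and hence rotationally invariant, so it must coincide with $u_\l$.

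The only genuinely non-routine point is the positivity of the boundary constants. For $c_\l$ this is immediate from the formula, as $1-\l^n>0$ and $1+\l^{n-1}>0$. For $a_\l$ the closed form is a difference of terms with no obvious sign, so rather than estimating it directly I would study $a_\l$ — or, after clearing the positive prefactor, an auxiliary function $g(\l)$ proportional to it — as a function of $\l\in(0,1)$, exploiting its behaviour at the right endpoint. One checks that both $a_\l$ and $\frac{d}{d\l}a_\l$ tend to $0$ as $\l\to 1^-$, while the second $\l$-derivative has a transparent sign: it factors as a positive multiple of $\l^{n-3}(1-\l)$ when $n\geq 3$, and equals $(\l^{-1}-1)/2$ when $n=2$, so it is positive throughout $(0,1)$. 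Convexity together with the double vanishing at $\l=1$ then forces $a_\l>0$ on $(0,1)$: the derivative is increasing with limiting value $0$ at $\l=1$, hence negative on $(0,1)$, so $a_\l$ is strictly decreasing there, so $a_\l > a_1 = 0$.

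I expect the main obstacle to be purely computational bookkeeping — tracking the powers of $\l$ through the two integrations and through the formation of $a_\l$ — together with the small observation that the second derivative of $a_\l$ (resp.\ of $g$) factors with an obvious sign; once that factorization is spotted the positivity claim is a two-line convexity argument. The split into the cases $n=2$ and $n\geq 3$ is unavoidable, since the antiderivative of $r^{1-n}$ degenerates to a logarithm exactly at $n=2$.
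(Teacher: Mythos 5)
Your proposal is correct and follows essentially the same route as the paper: reduce to the radial ODE, integrate once, determine $C$ from $-u'(1)=u'(\l)$, fix the remaining constant by $u(1)=0$, and prove $a_\l>0$ via the convexity argument (positive second derivative of $a_\l$, resp.\ of the auxiliary factor $g$, combined with the double vanishing of value and first derivative at $\l=1^-$). The extra remarks on the orientation of the inner normal, interior positivity via the maximum principle, and uniqueness are consistent with, and slightly more explicit than, the paper's treatment.
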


We will be perturbing the boundary of each annulus $\O_{\l}$ in the direction of the \emph{inner} unit normal to $\de \O_\l$. For a pair of functions $\vec{v} = (v_1,v_2) \in C^{2,\a}(\S^{n-1}) \times C^{2,\a}(\S^{n-1})$ of sufficiently small $C^{2,\a}$-norm, $0<\a<1$, denote the $\vec{v}$-deformation of $\O_\l$ by: 
\begin{align*}
    \O_{\lambda}^{\vec{v}} & := \left\{ x \in \R^n : \l + v_1(x/|x|) < |x| < 1-v_2(x/|x|)  \right\},
\end{align*}
so that its boundary $\p\O_{\l}^{\vec{v}} = \G_{1}^{\vec{v}} \sqcup \G_{\l}^{\vec{v}}$, where
\begin{align*}
    \G_{1}^{\vec{v}} & := \left\{ x \in \R^n : |x| = 1-v_2(x/|x|)  \right\} , \\
      \G_{\lambda}^{\vec{v}} & := \left\{ x \in \R^n : |x| = \l + v_1(x/|x|)  \right\}.
\end{align*}

Our perturbations $\vec{v}$ will ultimately be taken to be invariant with respect to the action of a certain subgroup $G$ of the orthogonal group $O(n)$. Recall that a function $\psi: \O \to \R$, defined on a $G$-invariant domain $\O$, is called $G$-invariant if
\begin{equation*}
    \psi = \psi \circ g, \qquad \text{for every } g \in G.
\end{equation*}
The notation for the usual H\"older and Sobolev function spaces, restricted to $G$-invariant functions, will include a subscript-$G$, as in $C^{k,\a}_G$, $L^2_G$, $H^k_G$, etc.

We know that for each $\l\in (0,1)$ and every $\vec{v}\in (C^{2,\a}(\S^{n-1}))^2$ of appropriately small $C^{2,\a}$-norm, the Dirichlet problem in the perturbed annulus $\O_{\l}^{\vec{v}}$
    \begin{equation}\label{eq:prob2}
        \begin{cases}
            -\D u = 1 & \text{ in } \O_{\l}^{\vec{v}} , \\
            u = 0 & \text{ on } \G_{1}^{\vec{v}} , \\
            u = a_\lambda & \text{ on } \G_{\l}^{\vec{v}} ,
        \end{cases}
    \end{equation}
with $a_{\l}$ defined as in \eqref{eq:bdrydata}, has a unique solution $u = u_\lambda(\vec{v}) \in C^{2,\a}(\overline{\O}_{\l}^{\vec{v}})$. Moreover, $u_\lambda(\vec{0}) = u_\l$, the map $(\vec{v},\l) \mapsto u_\lambda(\vec{v})$ is smooth by standard regularity theory, and if $\vec{v}$ is $G$-invariant, then so are $u_\lambda(\vec{v})$ and $\de_\nu u_\lambda(\vec{v})$, by the uniqueness of solutions. 


We would like to find out when the Dirichlet problem solution $u_{\l}(\vec{v})$ also satisfies a constant Neumann condition on $\de\O_{\l}^{\vec{v}}$. For the purpose, given $\l \in (0,1)$ and 
$U \subseteq C^{2,\a}(\S^{n-1}) \times C^{2,\a}(\S^{n-1})$ -- a sufficiently small neighbourhood of $\vec{0}$, we define the operator 
\begin{align}\label{eq:defFlambda}
	F_\l &:  U  \to C^{1,\a}(\S^{n-1}) \times C^{1,\a}(\S^{n-1}) , \notag \\
	F_\l(\vec{v}) &:=  \frac{1}{c_\l}\left( 
    \left.\frac{\p u_\l(\vec{v})}{\p\n} \right|_{\G_{\l}^{\vec{v}}} - c_\l, \left.\frac{\p u_\l(\vec{v})}{\p\n} \right|_{\G_{1}^{\vec{v}}} -c_\l
 \right) ,
\end{align}
where we canonically identify functions on $\de \O_\l^{\vec{v}}$ with pairs of functions on $\S^{n-1}$.
Schauder theory implies that \eqref{eq:defFlambda} is a good definition and the factor of $1/c_\l$ provides a convenient normalization. Now, the solution $u_{\l}(\vec{v})$ of the Dirichlet problem \eqref{eq:prob2} in $\O_{\l}^{\vec{v}}$ solves the full overdetermined problem \eqref{eq:prob1}-\eqref{BC} if and only if
\begin{equation}\label{eq:functionaleq}
F_{\l}(\vec{v}) = \vec{0}.
\end{equation}
Note that $F_\l(\vec{0}) = \vec{0}$ for every $\l \in (0,1)$. Our goal is to find a branch of solutions $(\vec{v}, \l)$ of \eqref{eq:functionaleq}, bifurcating from this trivial branch $(\vec{0},\l)$. To achieve it, we will need to understand how the kernel of the linearization $L_{\l}:=D_{\vec{v}}F_\l|_{\vec{v}=\vec{0}}$ depends on $\l$. In Proposition \ref{prop:linearization} of the next section we will derive a working formula for $L_{\l}$:
\[
L_\l(w_1,w_2) = \left( \left. -\frac{\p\f_\vec{w}}{\p\n} \right|_{\G_{\l}}  +  \left. \frac{w_1}{c_\l} \frac{\p^2u_\l}{\p r^2} \right|_{\G_{\l}}, \left. -\frac{\p\f_\vec{w}}{\p\n} \right|_{\G_{1}}  +  \left. \frac{w_2}{c_\l} \frac{\p^2u_\l}{\p r^2} \right|_{\G_{1}} \right) ,
\]
where $\phi_\vec{w}$ is the harmonic function $\phi$ on $\O_\l$ with boundary values $\f|_{\G_\l}(x) = w_1(x/|x|)$ and  $\f|_{\G_1}(x) = w_2(x/|x|)$. 

In order to study the kernel of $L_\l$, we will look more generally at the eigenvalue problem
\begin{equation*}
L_{\l}(\vec{w}) = \mu(\l) \vec{w} , \qquad  \vec{w}\in(C^{2,\a}(\S^{n-1}))^2.
\end{equation*}
For each spherical harmonic $\mathcal{Y}_k$ of degree $k\in \Nz$, the subspace $W_k=\text{Span}\{(\mathcal{Y}_k, 0), (0, \mathcal{Y}_k)\}$ is invariant under $L_\l$ and decomposes into eigenspaces for $L_\l|_{W_k}$, associated with two distinct eigenvalues $\mu_{k,1}(\l)<\mu_{k,2}(\l)$, for which we will calculate explicit formulas in Section \ref{opspec}. We will study the dependence of these eigenvalues on both $\l\in (0,1)$ and $k\in \Nz$, focussing on whether they cross $0$ as $\l$ varies in $(0,1)$. It turns out that when $k=1$, $\mu_{1,1}(\l) < 0$ while $\mu_{1,2}(\l) = 0$ for all $\l$, i.e. the eigenspace correponding to $\mu_{1,2}$ is always in the kernel of $L_\l$ (see Remark \ref{rem:k1evals}). This part of $\ker L_\l$ comes from the deformations of $\O_\l$, generated by translations. What we find for $k\geq 2$ is that the first eigenvalue branch $\mu_{k,1}(\l)$ is \emph{strictly decreasing} in $\l$ and that it crosses $0$ at a unique $\l_k^*\in (0,1)$. This is done in Lemma \ref{eigenvalsdecrease}. Moreover, we establish in Lemma \ref{eigenvalsincrease} that both $\mu_{k,1}(\l)$ and $\mu_{k,2}(\l)$ increase \emph{strictly} in $k\in \Nz$ for fixed $\l$. This means that for $k\geq 2$,  the second eigenvalue branch $\mu_{k,2}(\l)>0$ never crosses $0$, while the values of $\l_k^*$, at which  $\mu_{k,1}(\l)$ is zero, form a strictly increasing $k$-sequence; in addition, the eigenvalues for $k=0$, $\mu_{0,1}(\l)<\mu_{0,2}(\l) <0$ (Proposition \ref{prop:collectmono}). Therefore, at the critical values $\l=\l_k^*$, $k \geq 2$, the kernel of $L_{\l_k^*}$ over $(C^{2,\a}(\S^{n-1}))^2$ consists of the $\mu_{k,1}(\l_k^*)$-eigenspace plus the always present component of the $\mu_{1,2} \equiv 0$ eigenspace. 

We would like to point out that the arguments behind proving the key eigenvalue monotonicity properties in Lemmas \ref{eigenvalsdecrease} and \ref{eigenvalsincrease} are of a different nature from the ones used at the analogous phase of the constructions in \cite{schlenk2012bifurcating, fall2017unbounded, fall2018serrin}. The first two papers employed an argument based on rescaling and Bessel function identities, while the third utilized a neat characterization of the eigenvalues of the linearized operator in terms of solutions to a mixed boundary value problem for a first-order ODE. We are afforded neither method in our setting because of the lack of symmetry between the two boundary components of $\de \O_\l$. 
We prove the $\l$-monotonicity of the first branch $\mu_{k,1}(\l)$, $k\geq 2$, by analyzing the explicit formula for $\mu_{k,1}(\l)$ directly and using some delicate estimates, based on hyperbolic trigonometric identities (see the proof of Lemma \ref{eigenvalsdecrease}).  Unfortunately, this approach does not extend to the second eigenvalue branch $\mu_{k,2}(\l)$, $k\geq 2$, which we also believe to be decreasing in $\l$, based on numerics. Showing the latter is ultimately not necessary, since we prove that $\mu_{k,2}(\l)>0$  never contributes to the kernel of $L_\l$. Neither do the eigenvalues branches for $k=0$, $\m_{0,j}(\l)$, $j=1,2$, which are shown to be strictly negative.  In order to establish the $k$-monotonicity of $\mu_{k,j}(\l)$, for fixed $\l$, $j=1,2$, we treat $k$ as a continuous positive variable and extend the $\mu_{k,j}(\l)$ to be smooth functions in $(k,\l)\in (0,\infty)\times(0,1)$, continuous up to $k=0$ (see Remark \ref{rem:analytic}). Even so, trying to prove  $\de_k \mu_{k,j}(\l) >0$ directly from the formula for the eigenvalue turns out to be unyielding, and we accomplish it instead by looking at the matrix representation $M_{\l,k}$ of $L_\l|_{W_k}$ and showing that $\de_k M_{\l,k}$ is positive definite (see the proof of Lemma \ref{eigenvalsincrease}).

The Crandall-Rabinowitz Bifurcation Theorem (see the Appendix) requires that the linearized operator has a one-dimensional kernel at bifurcation values. To achieve this, in Section \ref{proofcomp} we will restrict  the operators $F_\l$ and $L_\l$ to $G$-invariant functions in $(C^{2,\a}(\S^{n-1}))^2$. The symmetry group $G$ will be chosen so as to completely eliminate the eigenspaces of $L_\l$ corresponding to $k=1$ (which contain the $\mu_{1,2}=0$ component of ker $L_\l$), and ensure that, whenever $\mu_{k,1}(\l)$ is an eigenvalue of the restricted $L_\l$ for some $k \geq  2$, it is of multiplicity $1$. Additionally, we will choose the group $G$ in a way that will guarantee that the constructed $G$-invariant domains $\O_\l^{\vec{v}}$ are not merely translations of the standard annulus. 

More precisely, let $-\D_{\S^{n-1}}$ be the Laplace-Beltrami operator on the sphere $\S^{n-1}$ and let $\{ \s_i \}_{i=0}^{\infty}$ be the sequence of its eigenvalues, i.e. \mbox{$\s_i= i(i+n-2)$.} We shall fix any group $G \leqslant O(n)$ that has the following two properties:
\begin{itemize}
	\item[(P1)] If $T$ is a translation of $\R^n$ and $T(\S^{n-1})$ is a $G$-invariant set, then $T$ is trivial.

	\item[(P2)]  If $\{ \s_{i_k} \}_{k=0}^\infty$ are the eigenvalues of $-\D_{\S^{n-1}}$ when restricted to the $G$-invariant functions, then $\s_{i_k}$ has multiplicity equal to $1$, i.e. there exists a unique (up to normalization) $G$-invariant spherical harmonic of degree $i_k$, $k\in \Nz$.
\end{itemize}
Note that $i_0=0$ and that because of (P1), spherical harmonics of degree $1$ are not $G$-invariant, i.e. $i_1\geq 2$. An example of a group, satisfying both (P1) and (P2), is $G=O(n-1) \times \Z_2$. For instance, when $n=2$, $G = \Z_2 \times \Z_2$ acts on $\R^2$ by reflections with respect to the two coordinate axes; the eigenvalues of $-\D_{\S^{1}}=-\de^2_{\th}$, restricted to $\Z_2 \times \Z_2$-invariant functions, are $\s_{i_k} = (2k)^2$, $k\in \Nz$, and the corresponding eigenfunctions (up to normalization) are $\cos(2k \th)$.

Let us denote $Y_k := \mathcal{Y}_{i_k}$, where $\mathcal{Y}_{i_k}$ is the unique $G$-invariant spherical harmonic of degree $i_k\in \Nz$, normalized in the $L^2$-norm, that is,
$$ \D_{\S^{n-1}} Y_k + \s_{i_k} Y_k = 0 , \qquad  \int_{\S^{n-1}} |Y_k|^2 \;dS = 1 , \qquad k \in \Nz .$$
Finally, denote by
\begin{equation}\label{innprod}
    \langle \vec{w},\vec{z} \rangle_{\l} := \l^{n-1} \int_{\S^{n-1}} w_1 z_1 \;dS + \int_{\S^{n-1}} w_2 z_2 \;dS , \qquad \vec{w},\vec{z}\in L^2(\S^{n-1}) \times  L^2(\S^{n-1}) ,
\end{equation}
the inner product on $L^2(\S^{n-1})\times  L^2(\S^{n-1})$ induced by the standard inner product on $L^2(\de \O_\l)$ under the natural identification 
\begin{equation}\label{eq:ident}
\begin{array}{rcl}
\vec{w} &\leftrightarrow& \psi \\
(w_1(x),w_2(x)) &=& (\psi(\lambda x), \psi(x)) \quad \text{for all } x\in \S^{n-1},
\end{array}
\end{equation}
and let
$$
\|\vec{w}\|_\l := \sqrt{ \langle \vec{w},\vec{w} \rangle_{\l} }
$$
be the induced $L^2$-norm. We point out that $L_\l$ is formally self-adjoint with respect to $\langle\cdot, \cdot\rangle_\l$ (see Remark \ref{rem:SA}).

As a result of restricting to pairs of $G$-invariant functions in $C^{2,\a}(\S^{n-1})$, the linearized operator $L_\l:(C^{2,\a}_G(\S^{n-1}))^2 \to (C^{1,\a}_G(\S^{n-1}))^2$ will now possess a one-dimensional kernel at each critical value $\l_k:=\l_{i_k}^*$, $k\in \N$ --  spanned by an element of the form $\vec{z}_k=(a_kY_k, b_kY_k)$ -- and its image will be the closed subspace of co-dimension $1$ orthogonal to $\vec{z}_k$ with respect to the inner product \eqref{innprod}. Moreover, because of the strict $\l$-monotonicity of $\mu_{i_k, 1}(\l)$, the tranversality condition \[\de_\l L_{\l}|_{\l=\l_k}(\vec{z}_k) = \mu_{i_k, 1}'(\l_k) \vec{z}_k \notin \text{im } L_{\l_k}\] is going to hold. Invoking the Crandall-Rabinowitz Bifurcation Theorem \ref{crt}, we reach at the statement of the refinement to Theorem \ref{teo1}.

\begin{theorem}\label{teo2}
 Let $n\geq 2$, $\a\in (0,1)$, and let $G$ and $Y_k$, $k \in \N$, be as above. There is a strictly increasing sequence $\{ \lambda_k \}_{k=1}^\infty$ of positive real numbers with $\lim_{k\to\infty} \lambda_k = 1$ and a sequence $\{ \vec{z}_k \}_{k=1}^\infty$ of non-zero elements of the form $\vec{z}_k = (a_kY_k,b_kY_k)$ with $\|\vec{z}_k\|_{\l_k}=1$, such that for each $k \in \N$, there exists $\e > 0$ and a smooth curve 
    $$\begin{array}{ccc}
    	(-\e,\e) & \to & \left( C^{2,\a}(\S^{n-1}) \right)^2 \times (0,1) \\
        s & \mapsto & (\vec{w}(s),\l(s))
    \end{array}$$
    satisfying $\vec{w}(0) = 0$, $\l(0) = \l_k$, such that for $\vec{v}(s) \in C^{2,\a}(\S^{n-1}) \times C^{,\a}(\S^{n-1})$ defined by
    \begin{equation*}
        \vec{v}(s) = s(\vec{z}_k + \vec{w}(s)),
    \end{equation*}
    the overdetermined problem
    \begin{equation}\label{nontrivialoverdet}
        \begin{cases}
            -\D u = 1  & \text{ in } \O_{\l(s)}^{\vec{v}(s)} , \\
            u = 0 & \text{ on } \G_{1}^{\vec{v}(s)} , \\
            u = \mbox{const}>0  & \text{ on } \G_{\l(s)}^{\vec{v}(s)} ,\\
            \de_{\nu} u = const >0 & \text{ on } \de \O_\l^{\vec{v}(s)},
	    \end{cases}
	\end{equation}
	admits a positive solution $u \in C^{2,\a}(\overline{\O}_{\l(s)}^{\vec{v}(s)})$. Moreover, for every $s \in (-\e,\e)$ the two components of $\vec{w}(s) = (w_1(s),w_2(s))$ are $G$-invariant functions that satisfy
    \begin{equation}\label{ortho}
         \langle \vec{w}(s),\vec{z}_k \rangle_{\l_k} = 0.
    \end{equation}
\end{theorem}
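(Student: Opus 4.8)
The plan is to apply the Crandall--Rabinowitz Bifurcation Theorem (Theorem~\ref{crt}) to the restriction of the operator $F_\l$ from \eqref{eq:defFlambda} to $G$-invariant perturbations, at the parameters singled out by the spectral analysis of Sections~\ref{opspec}--\ref{proofcomp}. First I would record that, since $G$-invariance of $\vec{v}$ is inherited by the Dirichlet solution $u_\l(\vec{v})$ of \eqref{eq:prob2} and by its normal derivative, $F_\l$ restricts to a smooth map $U\cap(C^{2,\a}_G(\S^{n-1}))^2\to(C^{1,\a}_G(\S^{n-1}))^2$ fixing $\vec{0}$, whose differential at $\vec{0}$ is the operator $L_\l$ of Proposition~\ref{prop:linearization}. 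Then I would set $\l_k:=\l^*_{i_k}$, the unique zero in $(0,1)$ of the branch $\mu_{i_k,1}$ furnished by Lemma~\ref{eigenvalsdecrease} (recall $i_k\geq i_1\geq 2$ by (P1)), and verify the sequence properties claimed in the theorem: the strict monotonicity of $\mu_{k,1}(\l)$ in $k$ (Lemma~\ref{eigenvalsincrease}), applied along the increasing sequence $\{i_k\}$, together with the strict $\l$-monotonicity of each branch, forces $\{\l_k\}$ to be strictly increasing, while $\l_k\to 1$ follows because for every fixed $\l_0\in(0,1)$ one has $\mu_{i_k,1}(\l_0)\to+\infty$ as $k\to\infty$ -- a consequence of the large-degree behaviour of the Dirichlet-to-Neumann operator, cf.\ Proposition~\ref{prop:collectmono} -- and hence $\l_k>\l_0$ for all large $k$.

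The heart of the argument is the verification of the three hypotheses of Theorem~\ref{crt} for $L_{\l_k}$ at the point $(\l_k,\vec{0})$. Since $L_\l$ commutes with $\D_{\S^{n-1}}$, it preserves each plane $W^G_j:=\mathrm{Span}\{(Y_j,0),(0,Y_j)\}$, $j\in\Nz$, on which it acts through a $2\times2$ matrix with eigenvalues $\mu_{i_j,1}(\l)<\mu_{i_j,2}(\l)$; expanding any $\vec{w}\in\ker L_{\l_k}\cap(C^{2,\a}_G(\S^{n-1}))^2$ in the $Y_j$'s therefore reduces the computation of the kernel to showing that this matrix is invertible for every $j\neq k$ and has a one-dimensional kernel for $j=k$. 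The first holds because, for $j\neq k$, both $\mu_{i_j,1}(\l_k)$ and $\mu_{i_j,2}(\l_k)$ are nonzero: $\mu_{i_j,1}$ is strictly decreasing in $\l$ with its only zero at $\l_j\neq\l_k$ (or is everywhere negative, when $i_j=0$), and $\mu_{i_j,2}$ is everywhere positive for $i_j\geq 2$ and everywhere negative for $i_j=0$, by Lemma~\ref{eigenvalsdecrease} and Proposition~\ref{prop:collectmono}; the second holds because $0=\mu_{i_k,1}(\l_k)<\mu_{i_k,2}(\l_k)$. This yields $\ker L_{\l_k}=\mathrm{Span}\{\vec{z}_k\}$ with $\vec{z}_k=(a_kY_k,b_kY_k)\neq\vec{0}$, which I then normalize so that $\|\vec{z}_k\|_{\l_k}=1$. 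That $L_{\l_k}\colon(C^{2,\a}_G(\S^{n-1}))^2\to(C^{1,\a}_G(\S^{n-1}))^2$ is Fredholm of index $0$ is precisely Proposition~\ref{verificationLlambda}; combined with the formal self-adjointness of $L_{\l_k}$ with respect to $\langle\cdot,\cdot\rangle_{\l_k}$ (Remark~\ref{rem:SA}), it identifies $\im L_{\l_k}$ with the closed subspace $\{\vec{g}:\langle\vec{g},\vec{z}_k\rangle_{\l_k}=0\}$ of codimension one, so that in particular $\vec{z}_k\notin\im L_{\l_k}$.

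For the transversality condition I would use the smoothness in $\l$ from Remark~\ref{rem:analytic} to pick a smooth family $\vec{z}(\l)\in W^G_{i_k}$ of eigenvectors for $\mu_{i_k,1}(\l)$ with $\vec{z}(\l_k)=\vec{z}_k$, differentiate the identity $L_\l\vec{z}(\l)=\mu_{i_k,1}(\l)\vec{z}(\l)$ at $\l=\l_k$, and use $\mu_{i_k,1}(\l_k)=0$ to get
\[
\p_\l L_\l\big|_{\l=\l_k}(\vec{z}_k)=\mu_{i_k,1}'(\l_k)\,\vec{z}_k-L_{\l_k}\vec{z}'(\l_k)\equiv\mu_{i_k,1}'(\l_k)\,\vec{z}_k\pmod{\im L_{\l_k}}.
\]
Since $\mu_{i_k,1}'(\l_k)<0$ by Lemma~\ref{eigenvalsdecrease} and $\vec{z}_k\notin\im L_{\l_k}$, the left-hand side is not in $\im L_{\l_k}$, which is the required transversality. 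Invoking Theorem~\ref{crt} at $(\l_k,\vec{0})$, with $Z_k:=\{\vec{w}\in(C^{2,\a}_G(\S^{n-1}))^2:\langle\vec{w},\vec{z}_k\rangle_{\l_k}=0\}$ chosen as a complement of $\ker L_{\l_k}$, then produces $\e>0$ and a smooth curve $s\mapsto(\vec{w}(s),\l(s))$ with $\vec{w}(s)\in Z_k$, $\vec{w}(0)=\vec{0}$, $\l(0)=\l_k$, such that $\vec{v}(s):=s(\vec{z}_k+\vec{w}(s))$ satisfies $F_{\l(s)}(\vec{v}(s))=\vec{0}$, i.e.\ \eqref{eq:functionaleq}; the constraint $\vec{w}(s)\in Z_k$ is exactly \eqref{ortho}, and the components of $\vec{w}(s)$ are $G$-invariant by construction. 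To finish, I would unwind $F_{\l(s)}(\vec{v}(s))=\vec{0}$ together with the definition of the Dirichlet solution $u:=u_{\l(s)}(\vec{v}(s))$ of \eqref{eq:prob2}: it satisfies $-\D u=1$ in $\O_{\l(s)}^{\vec{v}(s)}$, $u=0$ on $\G_1^{\vec{v}(s)}$, $u=a_{\l(s)}$ on $\G_{\l(s)}^{\vec{v}(s)}$, and $\p_\n u=c_{\l(s)}$ on all of $\p\O_{\l(s)}^{\vec{v}(s)}$; since $a_{\l(s)},c_{\l(s)}>0$ by Lemma~\ref{trivialsol} and $u>0$ in the domain by the strong maximum principle, $u$ is a positive $C^{2,\a}$ solution of \eqref{nontrivialoverdet}.

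The step I expect to demand the most care is the kernel computation, for it is there that the entire global spectral picture of Section~\ref{opspec} must be marshalled: one needs the sign and the monotonicity of \emph{every} eigenvalue branch $\mu_{i_j,1}$ and $\mu_{i_j,2}$, together with the elimination of the degree-one harmonics guaranteed by (P1), in order to rule out a spurious zero eigenvalue of $L_{\l_k}$ coming from some plane $W^G_j$ with $j\neq k$. By comparison, the Fredholm property and the transversality are comparatively direct, resting respectively on Proposition~\ref{verificationLlambda} and on the strict $\l$-monotonicity of $\mu_{i_k,1}$ from Lemma~\ref{eigenvalsdecrease}.
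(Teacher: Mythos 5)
Your proposal is correct and follows essentially the same route as the paper: restrict to $G$-invariant spaces, take $\l_k$ to be the zeros of $\mu_{i_k,1}$ supplied by Lemmas \ref{eigenvalsdecrease}--\ref{eigenvalsincrease} and Proposition \ref{prop:collectmono}, verify the one-dimensional kernel, closed codimension-one image and transversality (the content of Proposition \ref{verificationLlambda}), and apply the Crandall--Rabinowitz theorem to conclude, with positivity of $u$ and of the boundary constants coming from Lemma \ref{trivialsol} and the maximum principle. Your computation of the transversality modulo $\im L_{\l_k}$ is a harmless (indeed slightly more careful) variant of the paper's direct identity $\p_\l L_\l|_{\l=\l_k}(\vec{z}_k)=\mu_{i_k,1}'(\l_k)\vec{z}_k$.
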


Let us show how Theorem \ref{teo2} entails Theorem \ref{teo1}.
\begin{proof}[Proof of Theorem \ref{teo1}]
Fix any $k\in \N$ and $\a\in (0,1)$. We need only explain why for $s\neq 0$ the $C^{2,\a}$ domains $\O_{\l(s)}^{\vec{v}(s)}$, constructed in Theorem \ref{teo2}, are different from a standard annulus, and why they are actually smooth. 

Since the functions $v_1(s)$, $v_2(s)$, are $G$-invariant, the corresponding domains $\O_{\l(s)}^{\vec{v}(s)}$ and solutions $u$ of \eqref{nontrivialoverdet} are also $G$-invariant. We point out that the orthogonality condition \eqref{ortho} implies that for $s\neq 0$, the non-zero $\vec{v}(s)$ is also non-constant, which means that at least one of the boundary components $\G_{r}^{\vec{v}(s)}$, $r=\l(s),1$, is different from a central dilation of $\S^{n-1}$ with respect to the origin. In addition, Property (P1) of $G$ prevents $\O_{\l(s)}^{\vec{v}(s)}$ from being an affine transformation of the annulus $\O_{\l}$ that involves a non-trivial translation. All this guarantees the nontriviality of $\O_{\l(s)}^{\vec{v}(s)}$. 

The domains $\O=\O_{\l(s)}^{\vec{v}(s)}$ are constructed to be of class $C^{2,\a}$, but by the classical regularity result of Kinderlehrer and Nirenberg \cite{KindNiren}, the boundary of a $C^{2,\a}$-domain $\O$, supporting a solution $u\in C^{2,\a}(\overline{\O})$ to \eqref{eq:prob1}, gets upgraded to a smooth one (in fact, to an analytic one). The solution $u$ itself is in $C^{\infty}(\overline{\O})$. 
\end{proof}

\section{Reformulating the problem and deriving its linearization}\label{probset}

Let us first recast the operator $F_\l$, defined in \eqref{eq:defFlambda}, by pulling back the Dirichlet problem \eqref{eq:prob2} from $\O_\l^{\vec{v}}$ to the annulus $\O_\l$, where we shall use polar coordinates
\[
(0,\infty) \times \S^{n-1} \cong \R^n \setminus \{0\} \quad \text{under} \quad (r,\th) \mapsto x = r\th
\] 
to describe the geometry. In this way, $\O_\l \cong (\l,1) \times \S^{n-1}$, its boundary components $\G_\l \cong \{ \l \} \times \S^{n-1}$, $\G_1 \cong \{ 1 \} \times \S^{n-1}$ are two copies of $\S^{n-1}$, and we  naturally get the identification of functions \eqref{eq:ident}. 

For any $\vec{v} = (v_1,v_2) \in U \subseteq (C^{2,\a}(\S^{n-1}))^2$ of sufficiently small norm, we consider the diffeomorphism $\F: \O_\l \to \O_{\l}^{\vec{v}}$ defined in polar coordinates by
	\begin{equation}\label{eq:diffeo}
    	\F(r,\th) = \left(\big(1 + \eta_1(r) v_1(\th) + \eta_2(r) v_2(\th)\big)r, \th \right) ,
    \end{equation}
where $\eta_j$, $j=1,2$, are smooth functions satisfying
	\begin{equation}\label{cutoff}
     	\eta_1(r) = \begin{cases} 1/\l & \text{ if } r \leq \l + \d \\ 0 & \text{ if } r \geq \l +2\d \end{cases} , \qquad
    	\eta_2(r) = \begin{cases} -1 & \text{ if } r \geq 1-\d \\   0 & \text{ if } r \leq 1 -2\d \end{cases} ,
	\end{equation}
for some small enough $\d>0$.  We set on $\O_\l$ the pull-back metric $g = \F^*g_0$, where $g_0$ is the Euclidean metric on $\O_{\l}^{\vec{v}}$. Near $\de \O_\l$, the metric $g$ equals
	\begin{equation}\label{metricgv}
		g = (1 + \eta_j v_j)^2dr^2 + 2r \eta_j \; (1 + \eta_j v_j)drdv_j + r^2\eta_j^2 dv_j^2 + r^2(1 + \eta_j v_j)^2 g_{\S^{n-1}} ,
	\end{equation}
where $g_{\S^{n-1}}$ is the standard metric on $\S^{n-1}$. Since $\F$ is an isometry between $(\O_{\l}, g)$ and $(\O_\l^\vec{v}, g_0)$, $u_\l(\vec{v})$ is the solution of the Dirichlet problem \eqref{eq:prob2} in $\O_{\l}^{\vec{v}}$ if and only if $u_\l^*(\vec{v}) := \F^*u_\l(\vec{v})$ is the solution of
	\begin{equation}\label{eq:pullbackprob}
    	\begin{cases}
        	-\D_{g} u = 1 & \text{ in } \O_\l , \\
            u = 0 & \text{ on } \G_1 , \\
            u = a_\l & \text{ on } \G_\l .
        \end{cases}
    \end{equation}
By Schauder theory, $u_\l^*(\vec{v}) \in C^{2,\a}(\overline{\O}_\l)$ and it depends smoothly on $\vec{v}$.

Let $\n^*$ denote the \emph{inner} unit normal field to $\p\O_\l$ with respect to the metric $g$. We have $\F_*\n^* = \n$, and to find the expression for $\p_\n u_\l(\vec{v})$ in these coordinates, we need to compute $\p_{\n^*}u_\l^*(\vec{v}) = g(\nabla_g u_\l^*(\vec{v}), \n^*)$. In the new coordinates, the operator $F_\l: U \to (C^{2,\a}(\S^{n-1})^2$ thus becomes 
	\begin{equation}\label{eq:Flambdapolar}
		F_\l(\vec{v}) := \frac{1}{c_\l}\left( \left.\frac{\p u_\l^*(\vec{v})}{\p\n^*} \right|_{\G_\l} - \left.\frac{\p u_\l}{\p\n} \right|_{\G_\l} , \left.\frac{\p u_\l^*(\vec{v})}{\p\n^*} \right|_{\G_1} -\left.\frac{\p u_\l}{\p\n} \right|_{\G_1} \right).
	\end{equation}
Then $F_\l(\vec{0}) = \vec{0}$ for all $\l \in (0,1)$, since $u_\l^*(\vec{0}) = u_\l$, and $F_\l(\vec{v}) = \vec{0}$ if and only if $\p_{\n^*} u_\l^*(\vec{v})=c_\l$ is constant on $\p\O_\l$. The latter is equivalent to $u_\l(\vec{v})$ solving the overdetermined problem \eqref{eq:prob1}-\eqref{BC} in $\O_\l^{\vec{v}}$.

In the following proposition we will compute the linearization at $\vec{v}=\vec{0}$ of the operator $F_\l(\vec{v})$, reformulated as in \eqref{eq:Flambdapolar}. Recall that we use the identification \eqref{eq:ident} of functions on $\de \O_\l$ with a pair of functions on $\S^{n-1}$. 

\begin{prop}\label{prop:linearization}
    The smooth operator $F_\l$, defined in \eqref{eq:Flambdapolar}, has a linearization at $\vec{v} = \vec{0}$, \\ $L_\lambda := D_\vec{v} F_\lambda|_{\vec{v}=\vec{0}} : (C^{2,\a}(\S^{n-1}))^2 \to (C^{1,\a}(\S^{n-1}))^2$ given by 
    \begin{equation}\label{eq:defLlambda}
        L_\lambda(w_1,w_2) = \left(  \left.\frac{\p\f_\vec{w}}{\p\n} \right|_{\G_{\l}} + \frac{w_1}{c_\l} \left.\frac{\p^2 u_\l}{\p r^2} \right|_{\G_{\l}}, 
        \left.\frac{\p\f_\vec{w}}{\p\n} \right|_{\G_{1}} + \frac{w_2}{c_\l} \left.\frac{\p^2 u_\l}{\p r^2} \right|_{\G_{1}}
        \right) ,
    \end{equation}
    where for $\vec{w}=(w_1, w_2) \in (C^{2,\a}(\S^{n-1}))^2$, $\f_\vec{w}$ denotes the harmonic function $\f\in C^{2,\a}(\overline{\O}_\l)$ with boundary values 
    $\f|_{\G_\l}(x) = w_1(x/|x|)$ and $\f|_{\G_1}(x) = w_2(x/|x|)$.
    \begin{proof}
        As $F_\l$ is a smooth operator, its linearization at $\vec{0}$ is given by the directional derivative
        $$ L_\lambda(\vec{w}) = \lim_{t\to0} \frac{F_\lambda(t\vec{w})}{t} .$$
        Write $\vec{v}=(v_1, v_2)=t(w_1, w_2)$ for small $t$ and consider the diffeomorphism $\F = \F_t$ defined in \eqref{eq:diffeo} and the induced metric $g = g_t$ on $\O_\l$. Let $u_\l^*(\vec{v}) = u_t$ be the solution of the Dirichlet problem \eqref{eq:pullbackprob} in $\O_\l$, which smoothly depends on the parameter $t$. Since $u_\l$ is a radial function and can be extended by \eqref{eq:trivialsol} to solve $-\D_{g_0} u_\l =1$ in the whole of $\R^n \setminus \{ 0 \}$, we have that $u_\l^* := \F_t^*u_\l$ is well defined and solves
        $$ -\D_{g_t} u = 1 \quad \text{ in } \quad \O_\l. $$         
        Expanding  $u_\l^* = u_\l^*(r,\th)$ in a neighbourhood of $\p\O_\l$ to first order in $t$, we obtain
        $$u_\l^*(r,\th) = u_\l((1+t\eta_jw_j)r,\th) = u_\l(r) + tr \eta_j(r) w_j(\th) \frac{\p u_\l}{\p r} + O(t^2), $$
        where $\eta_j$, $j=1,2$, are the functions defined in \eqref{cutoff}.
        
        Let $\psi_t := u_t - u_\lambda^*$. Then $\psi_t \in C^{2,\a}(\overline{\O}_\l)$ is a solution of
        \begin{equation}\label{eq:eqpsi}
            \begin{cases}
            \D_{g_t} \psi_t = 0  & \text{ in } \O_\l , \\ 
            \psi_t = -u_\lambda^*  & \text{ on } \G_1 , \\
            \psi_t = a_\lambda - u_\lambda^* & \text{ on } \G_\l , \\ 
            \end{cases}
        \end{equation}
        which depends smoothly on $t$, with $\psi_0 = 0$. Setting $\dot{\psi} := \frac{d}{dt}\psi_t \big|_{t=0}$, we can differentiate \eqref{eq:eqpsi} at $t=0$ to obtain
        \begin{equation*}
            \begin{cases}
            \D \dot{\psi} = 0  & \text{ in } \O_\l , \\ 
            \dot{\psi} = (\p_ru_\lambda) w_2  = -c_\l w_2& \text{ on } \G_1 , \\
            \dot{\psi} = -(\p_ru_\lambda) w_1 = -c_\l w_1& \text{ on } \G_\l , \\ 
            \end{cases}
        \end{equation*}
        so that 
        \begin{equation}\label{eq:psiphi}
       \dot{\psi} =  -c_{\l} \phi_{\vec{w}}.
       \end{equation}
        Now, given that $\psi_t = t \dot{\psi} + O(t^2)$, we have in a neighbourhood of $\p\O_\lambda$
        \begin{equation}\label{approxu}
            u_t = u_\lambda + t \left( \dot{\psi} + r\eta_j w_j \frac{\p u_\l}{\p r} \right) + O(t^2).
        \end{equation}
        
        Recall that $\n^* = \n_t$ denotes the \emph{inner} unit normal field to $\p\O_\l$ with respect to the metric $g_t$. In the following we compute $\p_{\n_t}u_t$ to first order in $t$. As $u_t$ is constant on each boundary component, it follows that $\p_{\n_t} u_t = |\nabla_{g_t} u_t|$ on $\p\O_\l$. Using formula \eqref{metricgv} for $g_t$ in a neighbourhood of $\p\O_\l$, we find
        $$g_t^{-1} = \left( 1 - 2t\eta_j w_j + O(t^2) \right) dr^2 + O(t) drdw_j + \left( \frac{1}{r^2} + O(t) \right) d\th^2.$$
        Now, $|\nabla_{g_t} u_t|^2 = g_t^{rr}(\p_r u_t)^2$ on $\p\O_\l$ and after differentiating \eqref{approxu} with respect to $r$, we obtain
        \begin{equation}\label{eq:linearization1}
            |\nabla_{g_t} u_t| = \frac{\p u_\lambda}{\p\n} + t \left( \frac{\p \dot{\psi}}{\p\n} + w_j \frac{\p^2 u_\l}{\p r^2} \right) + O(t^2) \quad \text{on}\quad \de \O_\l.
        \end{equation}
        The formula \eqref{eq:defLlambda} for $L_\l$ hence follows from \eqref{eq:linearization1} and \eqref{eq:psiphi}.
        \end{proof}
    
\end{prop}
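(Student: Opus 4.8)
The plan is to use the smoothness of $F_\l$ and compute its linearization as a directional derivative, $L_\l(\vec{w})=\frac{d}{dt}\big|_{t=0}F_\l(t\vec{w})$, keeping every object on the \emph{fixed} annulus $\O_\l$ via the diffeomorphism $\F_t$ of \eqref{eq:diffeo} and the pulled-back metric $g_t=\F_t^*g_0$. The key device is that the radial function $u_\l$ of Lemma \ref{trivialsol} extends through $\p\O_\l$ by the explicit formula \eqref{eq:trivialsol} to a solution of $-\D_{g_0}u_\l=1$ on all of $\R^n\setminus\{0\}$; since $\F_t$ is an isometry from $(\O_\l,g_t)$ onto $(\O_\l^{\vec v},g_0)$ with $\vec v=t\vec w$, the pullback $u_\l^*:=\F_t^*u_\l$ is a \emph{particular} solution of $-\D_{g_t}u=1$ on $\O_\l$. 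Hence the difference $\psi_t:=u_t-u_\l^*$ between the genuine Dirichlet solution $u_t$ of \eqref{eq:pullbackprob} and $u_\l^*$ is $g_t$-\emph{harmonic}, and the computation reduces to that of a harmonic correction plus an explicit radial remainder.

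First I would record the first-order expansion near $\p\O_\l$ of $u_\l^*(r,\th)=u_\l\big((1+t\eta_jw_j(\th))r\big)=u_\l(r)+tr\eta_j(r)w_j(\th)\,\p_ru_\l+O(t^2)$, using that $\eta_1\equiv 1/\l$ near $\G_\l$ and $\eta_2\equiv -1$ near $\G_1$. Feeding this into the boundary conditions of \eqref{eq:pullbackprob} gives the Dirichlet data of $\psi_t$; differentiating at $t=0$ (where the metric is $g_0$) shows $\dot\psi:=\frac{d}{dt}\psi_t|_{t=0}$ is Euclidean-harmonic with $\dot\psi=-c_\l w_1$ on $\G_\l$ and $\dot\psi=-c_\l w_2$ on $\G_1$ — here one uses $\n=\p_r$ on $\G_\l$, $\n=-\p_r$ on $\G_1$, and $\p_\n u_\l=c_\l$ on both components. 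By uniqueness this identifies $\dot\psi=-c_\l\f_{\vec w}$, and from $\psi_t=t\dot\psi+O(t^2)$ one obtains $u_t=u_\l+t(\dot\psi+r\eta_jw_j\,\p_ru_\l)+O(t^2)$ in a neighbourhood of $\p\O_\l$.

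The last step, which is where care is needed, is to extract $\p_{\n_t}u_t$ to first order along $\p\O_\l$. Since $u_t$ is constant on each boundary component, its $g_t$-gradient there is $g_t$-normal, so $\p_{\n_t}u_t=|\nabla_{g_t}u_t|_{g_t}=\sqrt{g_t^{rr}}\,|\p_ru_t|$ on $\p\O_\l$ (the vanishing tangential derivatives kill the metric cross terms), with the $+$ sign fixed by continuity from $\p_\n u_\l=c_\l>0$. From \eqref{metricgv} one gets $g_t^{rr}=1-2t\eta_jw_j+O(t^2)$, the off-diagonal $dr\,dv_j$ terms contributing only at order $t^2$. Differentiating the $u_t$-expansion in $r$ — noting $\eta_j$ is locally constant near $\p\O_\l$, so $\p_r(r\eta_jw_j\,\p_ru_\l)=\eta_jw_j\,\p_ru_\l+r\eta_jw_j\,\p_r^2u_\l$ — and multiplying out, the crucial point is that the two $\eta_jw_j(\p_ru_\l)^2$ contributions cancel, leaving, after the scaling factor $r\eta_j$ and the sign of $\p_ru_\l$ combine on each component, the clean expansion $|\nabla_{g_t}u_t|=\p_\n u_\l+t(\p_\n\dot\psi+w_j\,\p_r^2u_\l)+O(t^2)$. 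Substituting $\dot\psi=-c_\l\f_{\vec w}$ into this and into the definition \eqref{eq:Flambdapolar} of $F_\l$, dividing by $t$ and letting $t\to0$, yields \eqref{eq:defLlambda}.

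I expect the main obstacle to be exactly this final bookkeeping: tracking the orientation conventions ($\n=\pm\p_r$ on the two components), the contribution of the metric cross terms to $g_t^{rr}$, and the cancellation that keeps the formula clean. A viable alternative would be to skip the pullback and differentiate the shape map $\vec v\mapsto\p_\n u_\l(\vec v)|_{\p\O_\l^{\vec v}}$ directly via Hadamard-type domain-variation formulas (the shape derivative of $u_\l(\vec v)$ plus the correction from evaluating $\p_\n$ at the moving boundary point), but the pullback route keeps everything on the fixed annulus $\O_\l$, which is the form needed for the spectral analysis in the subsequent sections.
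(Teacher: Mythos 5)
Your proposal is correct and follows essentially the same route as the paper's proof: pulling back to the fixed annulus via $\F_t$, using the radial extension of $u_\l$ so that $\psi_t=u_t-\F_t^*u_\l$ is $g_t$-harmonic, identifying $\dot\psi=-c_\l\f_{\vec w}$, and extracting $\p_{\n_t}u_t=\sqrt{g_t^{rr}}\,|\p_ru_t|$ to first order, including the cancellation of the $\eta_jw_j\,\p_ru_\l$ terms that yields \eqref{eq:defLlambda}.
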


\section{Spectrum of the linearized operator}\label{opspec}

In this section we give an account of the spectral properties of the linearized operator $L_\l$, which we derived in Proposition \ref{prop:linearization}.

Recall that a function $\mathcal{Y} \in C^{\infty}(\S^{n-1})$ is a spherical harmonic of degree $k \in \Nz$ if it is an eigenfunction of the Laplace-Beltrami operator $-\D_{\S^{n-1}}$ on $\S^{n-1}$, that is,
$$ \D_{\S^{n-1}} \mathcal{Y} + \s_k \mathcal{Y} = 0 ,$$
where $\s_k := k(k+n-2)$ is the corresponding eigenvalue. We first observe that the subspace $W$ generated by $\{ (\mathcal{Y},0), (0,\mathcal{Y}) \}$ is invariant under $L_\l$ and we shall derive a matrix representation of $L_\l|_{W}$ with respect to a certain convenient basis.

\begin{lemma}\label{invariance}
    Let $\mathcal{Y} \in C^{\infty}(\S^{n-1})$ be a spherical harmonic of degree $k\in \Nz$ and unit $L^2(\S^{n-1})$ norm, and let $W$ be the subspace of $C^{\infty}(\S^{n-1}) \times C^{\infty}(\S^{n-1})$ spanned by $\{ (\mathcal{Y},0), (0,\mathcal{Y}) \}$. 
Then $W$ is invariant under the action of $L_\l$. Moreover, if 
\begin{equation}\label{eq:basis}
\vec{e}_1 := (\l^{\frac{1-n}{2}} \mathcal{Y},0), \quad \vec{e}_2 := (0,\mathcal{Y}),
\end{equation}     
then $\mathcal{B}=\{\vec{e}_1, \vec{e}_2\}$ is an orthonormal basis for $W$ with respect to the inner product $\langle \cdot, \cdot \rangle_\l$ on $L^2(\S^{n-1})\times L^2(\S^{n-1})$, defined in \eqref{innprod}, and the matrix representing the restriction $L_\l|_{W}$ with respect to the basis $\mathcal{B}$ is given by
\begin{equation}\label{matrix}
    M_{\l,k} = \left( \begin{array}{cc}
        \frac{1}{\l} \frac{(k+n-2)\l^{2-n-k} + k\l^k}{\l^{2-n-k} - \l^k} - \frac{n-1}{\l} & \l^{\frac{1-n}{2}} \frac{2-n-2k}{\l^{2-n-k} - \l^k}  \\
        \l^{\frac{1-n}{2}} \frac{2-n-2k}{\l^{2-n-k} - \l^k} & \frac{k\l^{2-n-k} + (k+n-2)\l^k}{\l^{2-n-k} - \l^k} + (n-1)
    \end{array} \right) - \frac{1}{c_\l} \id, \qquad k\geq 1,
\end{equation}
while for $k=0$, 
\begin{align}
 M_{\l,0} &= \left( \begin{array}{cc}   \frac{1}{\l} \frac{(n-2)\l^{2-n}}{\l^{2-n} - 1} - \frac{n-1}{\l} & \l^{\frac{1-n}{2}} \frac{2-n}{\l^{2-n} - 1}  \\
        \l^{\frac{1-n}{2}} \frac{2-n}{\l^{2-n} - 1} & \frac{n-2}{\l^{2-n} - 1} + (n-1)\end{array} \right) - \frac{1}{c_\l} \id \quad \text{when} \quad n\geq 3, \label{matrix03} \\
       M_{\l,0} &= \left( \begin{array}{cc}   -\frac{1}{\l} \frac{1}{\log\l} - \frac{1}{\l} &  \l^{-\frac{1}{2}} \frac{1}{\log\l}  \\
        \l^{-\frac{1}{2}} \frac{1}{\log\l} & - \frac{1}{\log\l} + 1\end{array} \right) - \frac{1}{c_\l} \id  \quad\text{when} \quad n=2.  \label{matrix02}
\end{align}

    \begin{proof}
        It is easy to verify that $\mathcal{B}$ is an orthonormal basis for $W$ with respect to the inner product $\langle \cdot, \cdot \rangle_\l$. Let $\vec{w} = a \vec{e}_1 + b \vec{e}_2$. For $n\geq 3$ and $k\in \Nz$, as well as for $n=2$, $k\in \N$,  the function $\f_{\vec{w}} \in C^\infty(\overline{\O}_\l)$, defined in polar coordinates $r \in [\l,1]$, $\th \in \S^{n-1}$ by 
        $$\f_{\vec{w}}(r,\th) = (aA(r) + bB(r)) \mathcal{Y}(\th) ,$$ 
        where
        \begin{equation}\label{eq:AB}
            A(r) = \l^{\frac{1-n}{2}} \frac{r^{2-n-k}-r^k}{\l^{2-n-k}-\l^k} , \qquad B(r) =  \frac{\l^{2-n-k}r^k-\l^k r^{2-n-k}}{\l^{2-n-k}-\l^k} ,
        \end{equation}
        is harmonic in $\O_\l$ and satisfies $$\f_{\vec{w}}(\l,\th) = a  \l^{\frac{1-n}{2}} \mathcal{Y}(\th), \quad \f_{\vec{w}}(1,\th) = b \mathcal{Y} (\th).$$
        Formula \eqref{eq:defLlambda} for $L_{\l}$ hence gives
        \begin{align}\label{eq:invariance}
             & L_\l(a \vec{e}_1 + b \vec{e}_2) = \left( -(aA'(\l)+bB'(\l)) \mathcal{Y} +\frac{\p_r^2 u_\l(\l)}{c_\l} a\l^{\frac{1-n}{2}} \mathcal{Y} , (aA'(1)+bB'(1)) \mathcal{Y} + \frac{\p_r^2 u_\l(1)}{c_\l} b\mathcal{Y} \right) \notag \\
            &=  \left( -(aA'(\l)+bB'(\l))\mathcal{Y} - \frac{n-1}{\l} a\l^{\frac{1-n}{2}} \mathcal{Y} , (aA'(1)+bB'(1)) \mathcal{Y} + (n-1) b\mathcal{Y} \right) - \frac{1}{c_\l}(a \vec{e}_1 + b \vec{e}_2) ,
        \end{align}
where in the last equality we used the fact that $\de_r^2 u_\l = -1 - \frac{n-1}{r} \de_r u_\l$. Therefore, $W$ is an invariant subspace of $L_\l$. Plugging in \eqref{eq:AB} in \eqref{eq:invariance}, we easily derive formulas \eqref{matrix} and \eqref{matrix03} for the matrix representation $M_{\l,k}$ of $L_\l|_{W}$ with respect to the basis $\mathcal{B}$.

When $n=2$, and $k=0$, the substitute for \eqref{eq:AB} is 
\begin{equation}\label{eq:AB2}
A(r) = \l^{-1/2} \frac{\log r}{\log \l}, \qquad B(r) = -\frac{\log r - \log \l}{\log \l}
\end{equation}
and we easily check again the invariance of $W$ under $L_\l$ and derive \eqref{matrix02}.
    \end{proof}
\end{lemma}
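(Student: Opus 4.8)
\textbf{Proof plan for Lemma \ref{invariance}.}

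The plan is to verify the statement by a direct computation, exploiting the fact that $L_\l$ acts diagonally on spherical harmonics. First I would check that $\mathcal{B} = \{\vec{e}_1, \vec{e}_2\}$ is orthonormal with respect to $\langle \cdot,\cdot\rangle_\l$: since $\mathcal{Y}$ has unit $L^2(\S^{n-1})$ norm, the weight $\l^{n-1}$ in the first slot of \eqref{innprod} is exactly cancelled by the normalizing factor $\l^{\frac{1-n}{2}}$ appearing (squared) in $\vec{e}_1$, while $\vec{e}_2$ contributes $1$ from the unweighted second slot; the cross term vanishes because $\vec{e}_1, \vec{e}_2$ have disjoint supports among the two components.

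Next, to compute $L_\l$ on $W$, the key point is to identify the harmonic extension $\f_{\vec{w}}$ for $\vec{w} = a\vec{e}_1 + b\vec{e}_2$. Separating variables in polar coordinates, $\Delta_{g_0}(f(r)\mathcal{Y}(\th)) = 0$ reduces to the Euler equation $f'' + \frac{n-1}{r} f' - \frac{\s_k}{r^2} f = 0$, whose indicial roots are $k$ and $2-n-k$ (these coincide only when $n=2$, $k=0$, which is why that case needs the logarithmic solutions \eqref{eq:AB2}); the functions $A(r)$, $B(r)$ in \eqref{eq:AB} are the unique such combinations with $A(\l) = \l^{\frac{1-n}{2}}$, $A(1) = 0$ and $B(\l) = 0$, $B(1) = 1$, so that $\f_{\vec{w}}(r,\th) = (aA(r) + bB(r))\mathcal{Y}(\th)$ has the prescribed boundary traces. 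I would then plug this into formula \eqref{eq:defLlambda}. On $\G_\l$ the inner normal is $\p_r$ and on $\G_1$ it is $-\p_r$, which accounts for the sign pattern in \eqref{eq:invariance}; the second term in \eqref{eq:defLlambda} involves $\p_r^2 u_\l$ evaluated at $r = \l, 1$, and using the ODE $\p_r^2 u_\l = -1 - \frac{n-1}{r}\p_r u_\l$ together with $\p_r u_\l = -c_\l$ on $\p\O_\l$ turns $\frac{1}{c_\l}\p_r^2 u_\l$ into $-\frac{1}{c_\l} + \frac{n-1}{r}$, producing the $-\frac{1}{c_\l}\id$ summand and the explicit $\pm(n-1)/\l$, $(n-1)$ diagonal contributions. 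Finally I would differentiate $A$ and $B$ from \eqref{eq:AB}, evaluate $A'(\l), A'(1), B'(\l), B'(1)$, and read off the entries of $M_{\l,k}$; the $n=2$, $k=0$ case is handled identically using \eqref{eq:AB2}, where $A'(r) = \l^{-1/2}/(r\log\l)$ and $B'(r) = -1/(r\log\l)$.

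This is essentially a bookkeeping argument, so there is no conceptual obstacle; the only thing requiring care is keeping track of the normalizing powers of $\l$ and the signs of the normal derivatives so that the off-diagonal entries come out symmetric — which they must, since $L_\l$ is formally self-adjoint with respect to $\langle\cdot,\cdot\rangle_\l$ and $\mathcal{B}$ is $\langle\cdot,\cdot\rangle_\l$-orthonormal, giving a useful consistency check on the computation. One should also note that the invariance of $W$ under $L_\l$ is immediate once one observes that $\f_{\vec{w}}$ is a single spherical harmonic mode times a radial profile, so its normal derivatives on each boundary component are again multiples of $\mathcal{Y}$; the content of the lemma is really the explicit matrix.
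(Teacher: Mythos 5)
Your plan follows the paper's proof essentially verbatim: it verifies orthonormality of $\mathcal{B}$, separates variables to get the explicit radial profiles $A,B$ of \eqref{eq:AB} (with the logarithmic substitute \eqref{eq:AB2} in the degenerate case $n=2$, $k=0$, which you correctly explain via the coincidence of the indicial roots $k$ and $2-n-k$), plugs the harmonic extension into the linearization, and uses $\p_r^2 u_\l = -1-\frac{n-1}{r}\,\p_r u_\l$ to produce the $-\frac{1}{c_\l}\id$ shift and the remaining diagonal terms. The observation that formal self-adjointness of $L_\l$ with respect to $\langle\cdot,\cdot\rangle_\l$ forces the off-diagonal symmetry is a nice consistency check, though it does not constrain the diagonal.

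There is, however, one sign slip that would spoil the $(1,1)$ entry if carried through literally: you assert $\p_r u_\l = -c_\l$ on $\p\O_\l$, but the overdetermined condition is $\p_\n u_\l = c_\l$ with $\n$ the \emph{inner} normal, so $\p_r u_\l(\l) = +c_\l$ on $\G_\l$ (where $\n=\p_r$) and $\p_r u_\l(1) = -c_\l$ on $\G_1$ (where $\n=-\p_r$). Hence $\frac{1}{c_\l}\p_r^2 u_\l$ equals $-\frac{1}{c_\l}-\frac{n-1}{\l}$ on $\G_\l$ and $-\frac{1}{c_\l}+(n-1)$ on $\G_1$; your uniform expression $-\frac{1}{c_\l}+\frac{n-1}{r}$ is valid only on $\G_1$ and would yield $+\frac{n-1}{\l}$ rather than the correct $-\frac{n-1}{\l}$ in \eqref{matrix} (your hedge ``$\pm(n-1)/\l$'' suggests you sensed the ambiguity, and the self-adjointness check cannot detect it). Relatedly, keep the sign convention on the $\f_{\vec{w}}$-term straight: the derivation via $\dot{\psi}=-c_\l\f_{\vec{w}}$, the Section 2 outline, and the usage in \eqref{eq:invariance} all correspond to $-\p_\n\f_{\vec{w}}$, which is the version for which your statement about the normal directions accounting for the sign pattern is correct. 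These are local, easily fixable points; the approach itself is sound and identical to the paper's.
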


\begin{remark}\label{rem:SA}  
Note that the matrix $M_{\l,k}$ in Lemma \ref{invariance} is symmetric. This is not surprising taking into account the fact that the operator $L_{\l}$ is formally self-adjoint with respect to the inner product $\langle \cdot,\cdot \rangle_\l$ over the space $V:=(C^{2,\a}(\S^{n-1}))^2$. Indeed, for $\vec{w_1}, \vec{w_2} \in V$, identified as functions on $\de \O_{\l}$ under \eqref{eq:ident}, let $\phi_{\vec{w_1}}, \phi_{\vec{w_2}}$ denote their corresponding harmonic extensions to $\overline{\O}_\l$. Using formula \eqref{eq:defLlambda} for $L_\l$ and the definition \eqref{innprod} of the inner product $\langle \cdot,\cdot \rangle_\l$, an application of Green's formula and harmonicity yield
\begin{align*}
\langle L_\l \vec{w_1}, \vec{w_2} \rangle_\l & = \int_{\de \O_\l} \left(-\frac{\de\phi_{\vec{w_1}}}{\de\nu} + \frac{\de^2 u_\l}{\de r^2} \frac{\phi_{\vec{w_1}}}{c_\l} \right)\phi_{\vec{w_2}} \; dS \\
& = \int_{\de \O_\l} \phi_{\vec{w_1}} \left(-\frac{\de\phi_{\vec{w_2}}}{\de\nu} + \frac{\de^2 u_\l}{\de r^2} \frac{\phi_{\vec{w_2}}}{c_\l} \right) \; dS = \langle \vec{w_1}, L_\l \vec{w_2} \rangle_\l.
\end{align*}
\end{remark}

\begin{remark}\label{rem:analytic}
Note that the matrices $M_{\l, k}$, $\l \in (0,1)$, $k\in \Nz$, derived in Lemma \ref{invariance}, fit in a two-parameter family of symmetric matrices 
\begin{equation}\label{eq:matrixfam}
\mathcal{M}=\{M_{\l,k}: (\l, k)\in (0,1)\times [0,\infty)\}.
\end{equation}
where we define $M_{\l, k}$ for non-integral $k\in [0,\infty)$ by the analytic formula in equation \eqref{matrix}. In that way, the family $\mathcal{M}$ is analytic in both $(\l, k)\in (0,1)\times (0,\infty)$. Moreover, we can see that $\mathcal{M}$ is continuous up to $(\l, k)\in (0,1)\times \{0\}$ after easily checking
\[
\lim_{k\to 0^+} M_{\l,k} = M_{\l,0},
\]
where $M_{\l, 0}$ is given by \eqref{matrix02} when $n=2$ and by \eqref{matrix03} when $n=3$. 

The symmetric matrices of $\mathcal{M}$ are never multiples of the identity (the off-diagonal entries are non-zero), whence every $M_{\l,k} \in \mathcal{M}$ has two distinct real eigenvalues $$\m_{k,1}(\l)<\m_{k,2}(\l)$$ and each is a smooth function of  $(\l, k) \in (0,1)\times (0,\infty)$, continuous up to $(\l, k)\in (0,1)\times \{0\}$. Furthermore, since any eigenvector of $M_{\l,k}\in \mathcal{M}$ has two non-zero entries, we can define $v_j(\l,k)\in \R^2$ to be the unique eigenvector of $M_{\l,k}$, associated with $\m_{k,j}(\l)$, $j=1,2$, that has unit Euclidean norm and positive first entry. Clearly, the eigenvector $v_j(\l,k)$, $j=1,2$, depends smoothly in $(\l, k) \in (0,1)\times (0,\infty)$ and is contininuous up to $(\l, k)\in (0,1)\times \{0\}$, as well.

\end{remark}

Before we continue, it will be convenient to recast the matrices $M_{\l,k}\in \mathcal{M}$ in new notation that will greatly facilitate the computations when we analyze the behaviour of its eigenvalues $\m_{k,j}(\l)$, $j=1,2$. For that purpose, first define the matrix
\begin{equation}\label{modmatrix}
	\tilde{M}_{\l,k} := M_{\l, k} + \frac{1}{c_\l} \id , \qquad M_{\l,k}\in \mathcal{M}
\end{equation}
whose eigenspaces are the same as those of $M_{\l,k}$ and whose eigenvalues $\tilde{\m}_{k,j}(\l)$ are shifts of $\m_{k,j}(\l)$ by $1/c_\l$:
\begin{equation}\label{eq:eigvalshift}
        \m_{k,j}(\l) = \tilde{\m}_{k,j}(\l) - \frac{1}{c_\l} , \qquad j=1,2 .
    \end{equation}
For a given $k \in (0,\infty)$ we shall denote
\begin{equation}\label{eq:alphaomega}
\a := \frac{n}{2}+k-1, \quad \a \in (0,\infty)\quad \text{and}\quad e^\o:=\l^{-\a}, \quad \o \in (0,\infty).
\end{equation}

\begin{lemma}
	Let $k \in (0,\infty)$ be given and let $\a,\o$ be as in \eqref{eq:alphaomega}. The matrix $\tilde{M}_{\l,k}$ defined in \eqref{modmatrix} takes the form
	\begin{equation}\label{modmatrix2}
           \tilde{M}_{\l,k} = \left( \begin{array}{cc}
                \frac{1}{\l}(\a \coth \o - \frac{n}{2}) & -\frac{\a}{\sqrt{\l}}\frac{1}{\sinh \o} \\
                -\frac{\a}{\sqrt{\l}}\frac{1}{\sinh \o} & \a \coth \o + \frac{n}{2}
            \end{array} \right) 
        \end{equation}
	and its eigenvalues are given by
	\begin{equation}\label{modeigenvals}
            \tilde{\m}_{k,j}(\l) = \frac{C \mp \sqrt{C^2 - 4\l D}}{2\l} , \qquad j=1,2,
	\end{equation}
	where
	\begin{equation}\label{eq:CD}
		C = \a(\l+1)\coth \o + \frac{n}{2}(\l-1) , \qquad  D = \a^2 - \frac{n^2}{4} = (n+k-1)(k-1) .
	\end{equation}
	
	\begin{proof}
		These are straightforward computations. First note that
		\begin{align*}
			(k+n-2)\l^{1-n/2-k} + k\l^{n/2+k-1} & = (2\a-k)\l^{-\a} + k\l^\a \\
			& = \left( \frac{n}{2}-1 \right)(\l^{-\a} - \l^\a) + \a(\l^{-\a} + \l^\a)
		\end{align*}
		and, using the expresion in \eqref{matrix}, we find the $(1,1)$-entry of $\tilde{M}_{\l,k}$ to be
		\begin{align*}
			\frac{1}{\l}\left(\frac{(k+n-2)\l^{1-n/2-k} + k\l^{n/2+k-1}}{\l^{1-n/2-k} - \l^{n/2+k-1}} - (n-1) \right)& =\frac{1}{\l}\left( \left( \frac{n}{2}-1 \right) + \a\frac{\l^{-\a} + \l^\a}{\l^{-\a} - \l^\a} -(n-1) \right) \\
			& = \frac{1}{\l}\left(\a\coth\o - \frac{n}{2}\right).
		\end{align*}
		In a similar fashion we compute the $(2,2)$-entry of $\tilde{M}_{\l,k}$. Also
		$$ \l^{\frac{1-n}{2}} \frac{2-n-2k}{\l^{2-n-k}-\l^k} = -\frac{2\a}{\sqrt{\l}} \frac{1}{\l^{-\a}-\l^\a} = \frac{-\a}{\sqrt{\l}}\frac{1}{\sinh\o} .$$
		This establishes equation \eqref{modmatrix2}. The characteristic equation for $\tilde{M}_{\l,k}$ then computes to
		$$ \l\tilde{\m}^2 - \left\{ \a(\l+1)\coth\o +\frac{n}{2}(\l-1) \right\} \tilde{\m} + \left\{ \a^2 - \frac{n^2}{4} \right\} = 0 ,$$
		from which we derive formulas \eqref{modeigenvals}-\eqref{eq:CD} for its eigenvalues.
	\end{proof}
\end{lemma}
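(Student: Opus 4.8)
The lemma amounts to a direct algebraic simplification of the entries of $M_{\l,k}$ from \eqref{matrix} followed by solving a $2\times 2$ characteristic equation, so the plan is simply to carry that out cleanly. Throughout, $k\in(0,1)$ is included by reading \eqref{matrix} as the analytic formula of Remark \ref{rem:analytic}. The main organizing observation is that with $\alpha=n/2+k-1$ and $e^{\omega}=\lambda^{-\alpha}$ one has $\lambda^{-\alpha}-\lambda^{\alpha}=2\sinh\omega$, $\lambda^{-\alpha}+\lambda^{\alpha}=2\cosh\omega$, and hence $(\lambda^{-\alpha}+\lambda^{\alpha})/(\lambda^{-\alpha}-\lambda^{\alpha})=\coth\omega$; everything then reduces to rewriting the rational expressions in \eqref{matrix} in these terms.

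\emph{Step 1: the matrix.} Multiplying the numerator and denominator of each rational expression in \eqref{matrix} by $\lambda^{n/2-1}$ converts $\lambda^{2-n-k}$ into $\lambda^{-\alpha}$ and $\lambda^{k}$ into $\lambda^{\alpha}$. Using the elementary identity $k+n-2=2\alpha-k$, I would then split $(k+n-2)\lambda^{-\alpha}+k\lambda^{\alpha}=(\tfrac n2-1)(\lambda^{-\alpha}-\lambda^{\alpha})+\alpha(\lambda^{-\alpha}+\lambda^{\alpha})$ and, similarly, $k\lambda^{-\alpha}+(k+n-2)\lambda^{\alpha}=(1-\tfrac n2)(\lambda^{-\alpha}-\lambda^{\alpha})+\alpha(\lambda^{-\alpha}+\lambda^{\alpha})$. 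Recalling $\tilde M_{\l,k}=M_{\l,k}+c_\l^{-1}\id$, the $(1,1)$-entry then collapses (after absorbing the $-\tfrac{n-1}{\lambda}$ term) to $\tfrac1\lambda\bigl(\alpha\coth\omega-\tfrac n2\bigr)$ and the $(2,2)$-entry (after absorbing $+(n-1)$) to $\alpha\coth\omega+\tfrac n2$. For the off-diagonal entry one uses $2-n-2k=-2\alpha$, so that the same normalization plus the prefactor $\lambda^{(1-n)/2}$ turns it into $-\tfrac{\alpha}{\sqrt\lambda}\tfrac{1}{\sinh\omega}$. This gives \eqref{modmatrix2}.

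\emph{Step 2: the eigenvalues.} From \eqref{modmatrix2}, the trace of $\tilde M_{\l,k}$ equals $\tfrac1\lambda\bigl(\alpha\coth\omega-\tfrac n2\bigr)+\bigl(\alpha\coth\omega+\tfrac n2\bigr)=\tfrac C\lambda$ with $C=\alpha(\lambda+1)\coth\omega+\tfrac n2(\lambda-1)$, and its determinant equals $\tfrac1\lambda\bigl[\alpha^2\coth^2\omega-\tfrac{n^2}4-\tfrac{\alpha^2}{\sinh^2\omega}\bigr]$. Here the Pythagorean identity $\coth^2\omega-\sinh^{-2}\omega=1$ collapses this to $\tfrac1\lambda\bigl(\alpha^2-\tfrac{n^2}4\bigr)=\tfrac D\lambda$, and $\alpha^2-\tfrac{n^2}4=(n/2+k-1)^2-(n/2)^2=(n+k-1)(k-1)$ gives the stated $D$. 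Hence $\tilde\mu$ solves $\lambda\tilde\mu^2-C\tilde\mu+D=0$, and the quadratic formula yields \eqref{modeigenvals}, the $\mp$ sign being forced by $\tilde\mu_{k,1}<\tilde\mu_{k,2}$ and $\lambda>0$.

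There is no genuine obstacle here — the lemma is pure bookkeeping. The only points requiring a little care are tracking the $1/\lambda$ and $1/\sqrt\lambda$ prefactors attached to the entries of $M_{\l,k}$ and using $e^{\omega}=\lambda^{-\alpha}$ consistently to pass between monomials in $\lambda$ and hyperbolic functions; and noting that it is precisely $\coth^2\omega-\sinh^{-2}\omega=1$ that makes the determinant $D/\lambda$ with $D$ independent of $\omega$, and therefore of $\lambda$ — a fact that will be exploited repeatedly in the eigenvalue analysis that follows.
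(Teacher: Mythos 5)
Your proposal is correct and follows essentially the same route as the paper: multiply through by $\l^{n/2-1}$ to express the entries of \eqref{matrix} in terms of $\l^{\mp\a}$, use $k+n-2=2\a-k$ and the hyperbolic substitution $e^{\o}=\l^{-\a}$ to obtain \eqref{modmatrix2}, then read off the eigenvalues from the characteristic equation, with the identity $\coth^2\o-\sinh^{-2}\o=1$ reducing the determinant to $D/\l$. The only difference is cosmetic — you compute trace and determinant explicitly whereas the paper states the characteristic polynomial directly — so no further comment is needed.
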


\begin{remark}\label{rem:k1evals}
	Note that when $k=1$, we have $\a = n/2$ and $e^\o = \l^{-n/2}$, so that $C$ and $D$ in \eqref{eq:CD} evaluate to
	\[
	C=\frac{n}{2}\left((\l + 1)\frac{\l^{-n/2} + \l^{n/2}}{\l^{-n/2} - \l^{n/2}} + \l -1 \right) = n \frac{\l + \l^{n}}{1-\l^n} = \frac{\l}{c_\l}, \qquad D=0.
	\]
	Hence, \eqref{modeigenvals} gives us $\tilde{\m}_{1,1}(\l) = 0$ and $\tilde{\m}_{1,2}(\l)=1/c_\l$, and the eigenvalues of the matrix $M_{\l,1}$  are then given by 
	\begin{equation}\label{eq:k1evals}
	\m_{1,1}(\l) = -\frac{1}{c_\l} , \qquad \m_{1,2}(\l) = 0.
	\end{equation} 
	We observe that, over subspaces $W=\textrm{Span}\{ (\mathcal{Y},0), (0,\mathcal{Y}) \}$, where $\mathcal{Y}$ is a spherical harmonic of degree $1$, the linearization $L_{\l}|_W$ has a kernel of dimension $1$ for every $\l \in (0,1)$. This kernel precisely corresponds to deformations of the standard annulus $\O_\l$ generated by translations. 
	
\end{remark}

In the following key sequence of lemmas we will examine the behaviour of the eigenvalues $\mu_{k,1}(\l)$ and $\mu_{k,2}(\l)$ in both $k\in \Nz$ and $\l\in (0,1)$. See Figure \ref{fig:evals} below for a plot of these branches for $k=0,1,2,3$, in dimension $n=3$. 
\begin{figure}[hbtp]
\centering \includegraphics[scale=0.6]{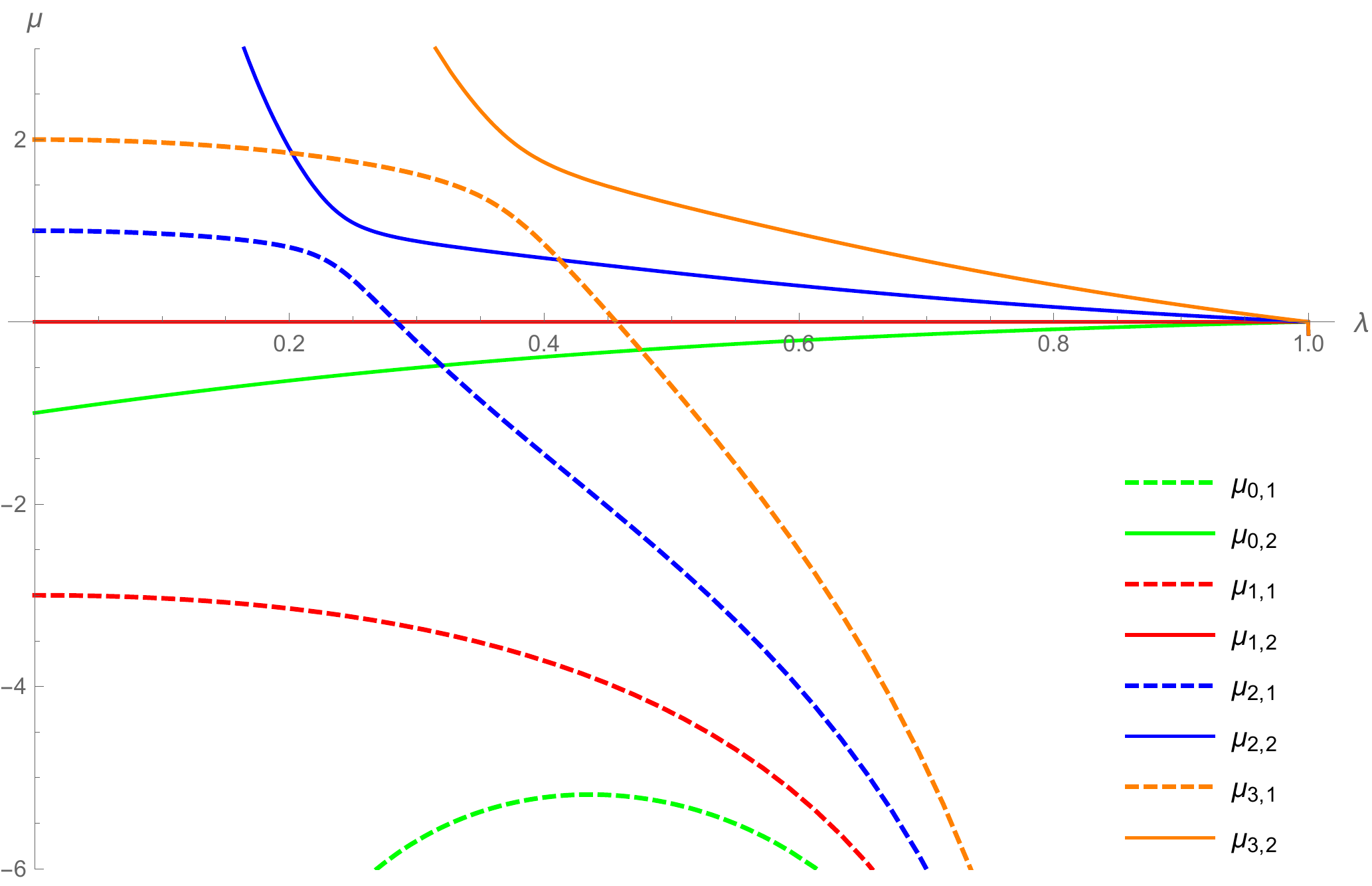} \caption{Mathematica plot of the eigenvalues $\mu_{k,j}$, $k=0,1,2,3$ and $j=1,2$, as a function of $\l\in (0,1)$ for $n=3$.}
\label{fig:evals}
\end{figure}


First, we will establish the first branch $\mu_{k,1}(\l)$ is strictly decreasing in $\l\in (0,1)$ for any given $k \geq 2$. The proof of the next lemma is based on a delicate use of hyperbolic trigonometric identities. 
\begin{lemma}\label{eigenvalsdecrease}
	For $k\in \N$, let $M_{\l,k}$ be the matrix defined in \eqref{matrix} and let $\m_{k,1}: (0,1) \to \R$ denote its first eigenvalue. For every $k \geq 2$ the following are satisfied:
	\begin{itemize}
		\item $\disp \lim_{\l\to0} \m_{k,1}(\l) = k-1$, $\disp \lim_{\l\to1} \m_{k,1}(\l) = -\infty$;
		\item $\m_{k,1}'(\l) < 0$, and so $\m_{k,1}(\l)$ is strictly decreasing in $\l$.
	\end{itemize}
	\begin{proof}
		Fix $k \in \R$, $k>1$. We shall first prove that $\m'_{k,1}(\l) < 0$, where $'$ denotes differentiating with respect to $\l$. As $\frac{1}{c_\l}=n\frac{1+\l^{n-1}}{1-\l^{n}} $ is strictly increasing in $\l$,  by \eqref{eq:eigvalshift} it suffices to show that $\tilde{\m}'_{k,1}(\l) \leq 0$. 
        Since by \eqref{modeigenvals}
        \[ \tilde{\m}_{k,1}(\l) = \frac{2D}{C + \sqrt{C^2-4\l D}} \]
        and given that $D=(n+k-1)(k-1)$ is positive and constant in $\l$, we only need to show that
        \begin{equation}\label{eq:derivinl}
        \frac{\p}{\p\l} \left( C + \sqrt{C^2-4\l D} \right) =  \frac{C'(\sqrt{C^2-4\l D}+C)-2D}{\sqrt{C^2-4\l D}} > 0.
        \end{equation}
        Using the identity $\coth^2\o - 1/\sinh^2\o = 1$ successively, we get 
        \begin{align*}
            C^2 - 4\l D & = \left\{ \a(1+\l)\coth \o - \frac{n}{2}(1-\l) \right\}^2 - 4\l \left\{ \a^2 - \frac{n^2}{4} \right\} \\
                        & = \a^2(1+\l)^2 \left( 1 + \frac{1}{\sinh^2 \o} \right) - n\a(1+\l)(1-\l)\coth \o + \frac{n^2}{4}(1-\l)^2 - 4\l \left( \a^2 - \frac{n^2}{4} \right) \\
                        & = \a^2(1-\l)^2 - n(1+\l)(1-\l)\coth\o + \frac{n^2}{4}(1+\l)^2 + \frac{\a^2(1+\l)^2}{\sinh^2\o} \\
                        & = \a^2(1-\l)^2 \left( 1 + \frac{1}{\sinh^2\o} \right) - n(1+\l)(1-\l)\coth\o + \frac{n^2}{4}(1+\l)^2 + \frac{4\l\a^2}{\sinh^2\o} \\
                        & = \left\{ \a(1-\l) \coth \o - \frac{n}{2}(1+\l) \right\}^2 + \frac{4\l\a^2}{\sinh^2 \o} ,
        \end{align*}        
        which gives the estimate 
        \begin{equation}\label{eq:monoest1}
        \sqrt{C^2-4\l D} > \a(1-\l)\coth \o - \frac{n}{2}(1+\l).
        \end{equation}
         On the other hand, using the fact that $\o' = -\a/\l$,
         \begin{equation}\label{eq:monoest2}
        C' = \a\coth \o + \frac{1+\l}{\l} \frac{\a^2}{\sinh^2 \o} + \frac{n}{2} > \a\coth \o + \frac{\a^2}{\sinh^2 \o} + \frac{n}{2},
        \end{equation}
        so that \eqref{eq:monoest1} and \eqref{eq:monoest2} yield
        \begin{align*}
            C'(\sqrt{C^2-4\l D} + C) - 2D & > \left( \a\coth \o + \frac{\a^2}{\sinh^2 \o} + \frac{n}{2} \right) \left( 2\a\coth \o - n \right) - \left( 2\a^2 -\frac{n^2}{2} \right) \\
            & = 2\a^2 \left( 1 + \frac{1}{\sinh^2\o} \right) + \frac{2\a^3\coth\o}{\sinh^2\o} - \frac{n\a^2}{\sinh^2\o} - 2\a^2 \\
            & = \frac{2\a^2}{\sinh^2 \o} \left( 1 + \a\coth \o - \frac{n}{2} \right) \\
            & > \frac{2\a^2}{\sinh^2 \o} \left( 1+\a-\frac{n}{2} \right) = \frac{2\a^2k}{\sinh^2 \o} > 0,
        \end{align*}
        where in the penultimate inequality we used $\coth \o >1$. This confirms \eqref{eq:derivinl} and completes the proof of the strict monotonicity of $\m_{k,1}(\l)$ in $\l$.
         
         To derive the limiting behaviour of $\m_{k,1}(\l)$ as $\l \to 0$, we first note that from the definition \eqref{eq:alphaomega}, we have $\lim_{\l\to0} \o = \infty$, so that \eqref{eq:CD} gives $\lim_{\l\to0} C = \a - n/2 = k-1$, and since $\lim_{\l\to0} 1/c_\l = n$, we calculate
         $$ \lim_{\l\to0} \m_{k,1}(\l) = \lim_{\l\to0} \left(\frac{2D}{C + \sqrt{C^2-4\l D}} - \frac{1}{c_\l}\right) =  \frac{D}{k-1} - n = k-1 .$$
         As to the limiting behaviour of  $\m_{k,1}(\l)$ as $\l \to 1$, the fact that $\tilde{\m}_{k,1}(\l)$ is decreasing in $\l$ and $\disp \lim_{\l\to1} 1/c_\l = \infty$  yields $$\lim_{\l\to 1} \m_{k,1}(\l) = -\infty.$$
	\end{proof}
\end{lemma}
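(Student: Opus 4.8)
The plan is to work directly with the closed-form expression $\tilde{\m}_{k,1}(\l) = 2D/(C+\sqrt{C^2-4\l D})$ from \eqref{modeigenvals}, using the shift relation \eqref{eq:eigvalshift} and monotonicity of $1/c_\l$ to reduce the claim $\m_{k,1}'(\l)<0$ to showing $\tilde{\m}_{k,1}'(\l)\leq 0$. Since $D=(n+k-1)(k-1)>0$ is a positive constant in $\l$ for $k>1$, the sign of $\tilde{\m}_{k,1}'(\l)$ is opposite to that of $\frac{\p}{\p\l}\bigl(C+\sqrt{C^2-4\l D}\bigr)$, so it suffices to prove that this latter derivative is strictly positive. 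Writing it out, one needs $C'(\sqrt{C^2-4\l D}+C)-2D>0$, and the two ingredients will be a good lower bound for $\sqrt{C^2-4\l D}$ and a good lower bound for $C'$.

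For the first ingredient, I would complete the square inside $C^2-4\l D$: using $\coth^2\o-1/\sinh^2\o=1$ repeatedly, the expression $\{\a(1+\l)\coth\o-\tfrac n2(1-\l)\}^2-4\l(\a^2-\tfrac{n^2}4)$ rearranges into $\{\a(1-\l)\coth\o-\tfrac n2(1+\l)\}^2+\tfrac{4\l\a^2}{\sinh^2\o}$, which immediately gives the clean estimate $\sqrt{C^2-4\l D}>\a(1-\l)\coth\o-\tfrac n2(1+\l)$. For the second ingredient, differentiating $C=\a(1+\l)\coth\o+\tfrac n2(\l-1)$ in $\l$ and using $\o'=-\a/\l$ (from $e^\o=\l^{-\a}$) produces $C'=\a\coth\o+\tfrac{1+\l}{\l}\cdot\tfrac{\a^2}{\sinh^2\o}+\tfrac n2$; since $\tfrac{1+\l}{\l}>1$ on $(0,1)$ this is bounded below by $\a\coth\o+\tfrac{\a^2}{\sinh^2\o}+\tfrac n2$. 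Substituting both bounds and expanding, the cross terms telescope down to $\tfrac{2\a^2}{\sinh^2\o}\bigl(1+\a\coth\o-\tfrac n2\bigr)$, and using $\coth\o>1$ together with $1+\a-\tfrac n2=k$ gives a lower bound of $\tfrac{2\a^2 k}{\sinh^2\o}>0$. That establishes $\m_{k,1}'(\l)<0$.

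The limiting values come essentially for free once the monotonicity machinery is in place. As $\l\to0^+$ we have $\o=-\a\log\l\to\infty$, so $\coth\o\to1$ and $1/\sinh^2\o\to0$, whence $C\to\a-\tfrac n2=k-1$ and $1/c_\l\to n$; plugging into $\tilde{\m}_{k,1}=2D/(C+\sqrt{C^2-4\l D})$ and subtracting $1/c_\l$ gives $D/(k-1)-n=(n+k-1)-n=k-1$. For $\l\to1^-$, the cleanest route is to note that $\tilde{\m}_{k,1}(\l)$ stays bounded (in fact $\to$ a finite limit, since $C\to 2\a\coth\o_1$ with $\o_1=\o|_{\l=1}\in(0,\infty)$ and $D$ is fixed) while $1/c_\l=n\tfrac{1+\l^{n-1}}{1-\l^n}\to\infty$, so $\m_{k,1}(\l)=\tilde{\m}_{k,1}(\l)-1/c_\l\to-\infty$; alternatively one can simply invoke that $\tilde{\m}_{k,1}$ is decreasing and bounded below as $\l$ increases, so its limit is finite, and subtract the blow-up of $1/c_\l$.

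The main obstacle is the algebraic identity establishing the lower bound for $\sqrt{C^2-4\l D}$: it is not obvious a priori that $C^2-4\l D$ completes into a perfect square plus a manifestly positive remainder, and finding the right grouping (trading a factor $(1+\l)^2$ for $(1-\l)^2$ via the Pythagorean-type identity for $\coth$ and $\sinh$, twice) is the delicate combinatorial step on which everything hinges. Once that identity is found, the rest is bookkeeping, but I would expect to spend most of the effort verifying that rearrangement carefully and checking that the resulting chain of inequalities is strict at each stage (in particular that $\tfrac{1+\l}{\l}>1$ and $\coth\o>1$ hold strictly on the open interval, which they do since $\l\in(0,1)$ and $\o>0$).
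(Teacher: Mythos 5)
Your proposal follows the paper's proof essentially verbatim: the same reduction via the shift by $1/c_\l$, the same completion-of-square identity yielding \eqref{eq:monoest1}, the same lower bound \eqref{eq:monoest2} for $C'$, the same telescoping to $\tfrac{2\a^2 k}{\sinh^2\o}>0$, and the same treatment of the limits. One tiny slip: as $\l\to1^-$ we have $\o=-\a\log\l\to0$, so $C\to\infty$ rather than converging to $2\a\coth\o_1$ with $\o_1\in(0,\infty)$ (though $\tilde\m_{k,1}\to0$ is still bounded), but your alternative justification — $\tilde\m_{k,1}$ decreasing while $1/c_\l\to\infty$ — is exactly the paper's argument and suffices.
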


Next, we will prove that, for fixed $\l$, both $\m_{k,1}(\l)$ and $\m_{k,2}(\l)$ increase with $k$. As the explicit formulas \eqref{modeigenvals}-\eqref{eq:CD} for the eigenvalues turn out to be unyielding, we accomplish this instead by treating $k$ as a continuous variable and showing that $\p_k M_{\l,k}$ is positive definite. 
\begin{lemma}\label{eigenvalsincrease}
	For fixed $\l \in (0,1)$ and $j=1,2$ the sequence $\{ \m_{k,j}(\l) \}_{k=0}^{\infty}$ is strictly increasing. 
	\begin{proof}
		We shall treat $k\in [0,\infty)$ as a continuous variable, following the discussion in Remark \ref{rem:analytic}. First, we restrict ourselves to $k>0$ and fix $\l$. Recall that in the remark we defined $v = v_j(\l,k)\in \R^2$ to be the unique eigenvector of $M_{\l,k}$ associated with eigenvalue $\m_{k,j}(\l)$, $j=1,2$, which has unit Euclidean norm and positive first entry. Then $\p_k v \in \R^2$ is orthogonal to $v$ and since $M_{\l,k}$ is symmetric, we have
        \begin{equation}\label{eq:orthok}
        \langle M_{\l,k}(\p_k v),v \rangle = \langle \p_k v, M_{\l,k} v \rangle =\m_{k,j} \langle \p_k v, v \rangle =0.
        \end{equation}
         Differentiating the identity $\m_{k,j}=\langle M_{\l,k}v,v \rangle$ with respect to $k$ and using \eqref{eq:orthok}, we obtain
        $$ \p_k \m_{k,j} = \langle (\p_k M_{\l,k})v,v \rangle + \langle M_{\l,k}(\p_k v),v \rangle + \langle M_{\l,k} v,\p_k v \rangle = \langle \p_k M_{\l,k}v,v \rangle.$$
        Therefore, we will have the desired $\p_k \m_{k,j}>0$ once we show that the symmetric matrix $\p_k M_{\l,k}$ is positive definite. Using $\de \a/\de k = 1$ and $\de \o/\de k =\o /\a$, we compute from \eqref{modmatrix2}
        \begin{equation*}
            \p_k M_{\l,k} = \p_k \tilde{M}_{\l,k} = \left(\begin{array}{cc}
                \frac{1}{\l} \left( \coth \o - \frac{\o}{\sinh^2 \o} \right) & \frac{1}{\sqrt{\l}} \left( \frac{\o\cosh \o}{\sinh^2 \o} - \frac{1}{\sinh \o} \right) \\
                 \frac{1}{\sqrt{\l}} \left( \frac{\o\cosh \o}{\sinh^2 \o} - \frac{1}{\sinh \o} \right) & \coth \o - \frac{\o}{\sinh^2 \o}.
            \end{array}  \right).
        \end{equation*}
        We see that its determinant
        \begin{align*}
            \det (\p_k M_{\l,k}) & = \frac{1}{\l} \left\{ \left( \coth \o - \frac{\o}{\sinh^2 \o} \right)^2 - \left( \frac{\o\cosh \o}{\sinh^2 \o} - \frac{1}{\sinh \o} \right)^2 \right\} \\
                                 & = \frac{1}{\l\sinh^4 \o} (\sinh^2 \o - \o^2) (\cosh^2 \o -1) > 0,
        \end{align*}
        as $\sinh \o > \o$ and $\cosh \o >1$ for $\o \in (0,\infty)$. Furthermore, the $(2,2)$-entry of $\p_k M_{\l,k}$ satisfies
        \begin{equation*}
            \coth \o - \frac{\o}{\sinh^2 \o} = \frac{\cosh \o \sinh \o - \o}{\sinh^2 \o} >\frac{\sinh \o - \o}{\sinh^2 \o}> 0 .
        \end{equation*}
        Therefore, by Sylvester's criterion the matrix  $\p_k M_{\l,k}$ is positive definite for $k>0$. 
        Since according to Remark \ref{rem:analytic}, $\m_{k,j}(\l)$ is continuous in $k\in [0,\infty)$ for fixed $\l$, we can conclude $$\m_{k+1,j}(\l) > \m_{k,j}(\l) \quad \text{for all} \quad k\in \Nz, \quad j=1,2.$$ 
	\end{proof}
\end{lemma}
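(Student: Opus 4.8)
The plan is to treat $k$ as a continuous parameter in $(0,\infty)$, following Remark \ref{rem:analytic}, and to prove the stronger infinitesimal statement that $\p_k \m_{k,j}(\l) > 0$ for all $(\l,k)\in(0,1)\times(0,\infty)$, $j=1,2$. Since each $\m_{k,j}(\l)$ extends continuously to $k=0$, this immediately yields $\m_{k+1,j}(\l)>\m_{k,j}(\l)$ for every $k\in\Nz$, which is the assertion of the lemma.

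The first step is a Hellmann--Feynman type reduction that sidesteps differentiating eigenvectors. Let $v=v_j(\l,k)\in\R^2$ be the unit-norm eigenvector of $M_{\l,k}$ associated with $\m_{k,j}(\l)$, which depends smoothly on $k$ by Remark \ref{rem:analytic}. Differentiating $\langle v,v\rangle=1$ gives $\langle\p_k v,v\rangle=0$, and since $M_{\l,k}$ is symmetric with $M_{\l,k}v=\m_{k,j}v$,
\[
\langle M_{\l,k}(\p_k v),v\rangle=\langle\p_k v,M_{\l,k}v\rangle=\m_{k,j}\langle\p_k v,v\rangle=0.
\]
Hence, differentiating $\m_{k,j}=\langle M_{\l,k}v,v\rangle$ in $k$, the two terms involving $\p_k v$ drop out and $\p_k\m_{k,j}=\langle(\p_k M_{\l,k})v,v\rangle$. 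It therefore suffices to show that the symmetric matrix $\p_k M_{\l,k}$ is positive definite.

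The second step is to compute $\p_k M_{\l,k}$ and check positive definiteness via Sylvester's criterion. Since the shift $1/c_\l$ is independent of $k$, we have $\p_k M_{\l,k}=\p_k\tilde M_{\l,k}$, and I would differentiate the explicit form \eqref{modmatrix2} of $\tilde M_{\l,k}$ using $\p_k\a=1$ and $\p_k\o=\o/\a$ (which follows from $\o=-\a\log\l$). Both diagonal entries then come out as positive multiples of $\coth\o-\o/\sinh^2\o=(\sinh\o\cosh\o-\o)/\sinh^2\o$, which is positive because $\sinh\o\cosh\o>\sinh\o>\o$ for $\o>0$. Writing all entries over the common denominator $\sinh^2\o$, the determinant of $\p_k M_{\l,k}$ becomes $\tfrac1{\l\sinh^4\o}$ times a difference of two squares, which factors as $(\sinh^2\o-\o^2)(\cosh^2\o-1)>0$. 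By Sylvester's criterion $\p_k M_{\l,k}$ is positive definite, and combined with the first step and continuity at $k=0$ this completes the proof.

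The main hurdle will be purely computational: getting the chain-rule relation $\p_k\o=\o/\a$ right and spotting the fortunate factorization of $\det(\p_k M_{\l,k})$ as $(\sinh^2\o-\o^2)(\cosh^2\o-1)$. The conceptual point worth emphasizing is that the Hellmann--Feynman reduction lets one avoid the unwieldy explicit eigenvalue formulas \eqref{modeigenvals}--\eqref{eq:CD} entirely, replacing the problem of showing $\p_k\m_{k,j}>0$ with the much more tractable positive-definiteness of a single $2\times2$ matrix.
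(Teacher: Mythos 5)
Your proposal is correct and follows essentially the same route as the paper: the Hellmann--Feynman reduction $\p_k\m_{k,j}=\langle(\p_k M_{\l,k})v,v\rangle$, the explicit differentiation of $\tilde M_{\l,k}$ with $\p_k\a=1$, $\p_k\o=\o/\a$, positivity of the diagonal entry $\coth\o-\o/\sinh^2\o$ and the factorization $\det(\p_k M_{\l,k})=\frac{1}{\l\sinh^4\o}(\sinh^2\o-\o^2)(\cosh^2\o-1)$, Sylvester's criterion, and continuity at $k=0$. No gaps to report.
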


In the final lemma of this section we derive the asymptotics of  $\m_{k,1}(\l)$ and $\m_{k,2}(\l)$ as $k\to\infty$.
\begin{lemma}\label{eigenvalsasymp}
	For fixed $\l \in (0,1)$ the sequences $\{ \m_{k,j}(\l) \}_{k=0}^\infty$, $j=1,2$, have the asymptotics $$\lim_{k\to\infty} \frac{\m_{k,1}(\l)}{k} = 1 , \qquad \lim_{k\to\infty} \frac{\m_{k,2}(\l)}{k} = \frac{1}{\l} .$$
	\begin{proof}
		From the definition of $C$ and $D$ in \eqref{eq:CD} and the fact that $\lim_{k\to\infty} \coth\o = 1$, we calculate 
		$$ \lim_{k\to\infty} \frac{C}{k} = 1 + \l ,\qquad  \lim_{k\to\infty} \frac{D}{k^2} = 1 .$$
		Hence, using equations \eqref{eq:eigvalshift} and \eqref{modeigenvals}, we obtain
		\begin{align*} 
		\lim_{k\to\infty} \frac{\m_{k,j}(\l)}{k} &= \lim_{k\to\infty}\left(\frac{\tilde{\m}_{k,j}(\l)}{k} - \frac{1}{c_\l k}\right) = \lim_{k\to\infty} \frac{1}{2\l} \left( \frac{C}{k} \mp \sqrt{\frac{C^2}{k^2} - \frac{4\l D}{k^2}} \right) \\
		& = \frac{(1+\l) \mp \sqrt{(1+\l)^2 - 4\l}}{2\l} = \left\{\begin{array}{ll}1 & j=1\\ 1/\l  & j=2 \end{array}\right..
		\end{align*}
	\end{proof}
\end{lemma}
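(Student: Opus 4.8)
I would read the asymptotics straight off the closed forms \eqref{modeigenvals}--\eqref{eq:CD} for the shifted eigenvalues $\tilde{\m}_{k,j}(\l)$, combined with the shift relation \eqref{eq:eigvalshift}. Since $\l\in(0,1)$ is fixed and, by \eqref{eq:alphaomega}, $\o=\a\log(1/\l)$ with $\a=\tfrac n2+k-1$, we have $\o\to\infty$ as $k\to\infty$, so $\coth\o\to1$ and $1/\sinh\o\to0$. The first step is therefore to divide $C$ and $D$ from \eqref{eq:CD} by the appropriate powers of $k$ and use $\a/k\to1$ to obtain
$$
\lim_{k\to\infty}\frac{C}{k}=1+\l,\qquad \lim_{k\to\infty}\frac{D}{k^2}=1 .
$$

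Next I would divide \eqref{modeigenvals} by $k$ and take $k\to\infty$; since $1/c_\l$ is independent of $k$, the contribution $(c_\l k)^{-1}$ from \eqref{eq:eigvalshift} drops out, leaving
$$
\lim_{k\to\infty}\frac{\m_{k,j}(\l)}{k}=\frac{1}{2\l}\Bigl((1+\l)\mp\sqrt{(1+\l)^2-4\l}\,\Bigr),\qquad j=1,2 .
$$
The one place that deserves attention is the simplification of the discriminant: $(1+\l)^2-4\l=(1-\l)^2$, and because $\l<1$ the square root equals $1-\l$ rather than $\l-1$. With the lower sign ($j=1$) this gives $\tfrac{1}{2\l}\bigl((1+\l)-(1-\l)\bigr)=1$, and with the upper sign ($j=2$) it gives $\tfrac{1}{2\l}\bigl((1+\l)+(1-\l)\bigr)=\tfrac1\l$, which is the claim.

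A slightly cleaner alternative that sidesteps the radical entirely is to note from \eqref{modmatrix2} that $k^{-1}\tilde{M}_{\l,k}\to\operatorname{diag}(\l^{-1},1)$ entrywise as $k\to\infty$: the diagonal entries $\tfrac1\l(\a\coth\o-\tfrac n2)$ and $\a\coth\o+\tfrac n2$ are $\tfrac k\l+o(k)$ and $k+o(k)$, while the off-diagonal entry decays like $1/\sinh\o$. Since the eigenvalues of a symmetric matrix are continuous in its entries and $\tilde{\m}_{k,1}(\l)<\tilde{\m}_{k,2}(\l)$, and since $\l<1$ forces $\l^{-1}>1$, this yields $k^{-1}\tilde{\m}_{k,1}(\l)\to1$ and $k^{-1}\tilde{\m}_{k,2}(\l)\to\l^{-1}$; one last application of \eqref{eq:eigvalshift} finishes the argument. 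I do not expect a genuine obstacle here -- the lemma is essentially a bookkeeping limit -- the only things to be careful with being the matching of the $\mp$ in \eqref{modeigenvals} with the ordering $\m_{k,1}<\m_{k,2}$, and the sign of $\sqrt{(1-\l)^2}$.
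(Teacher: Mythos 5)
Your argument is correct and is essentially the paper's own proof: compute $\lim_{k\to\infty}C/k=1+\l$ and $\lim_{k\to\infty}D/k^2=1$ from \eqref{eq:CD}, divide the closed form \eqref{modeigenvals} by $k$, note that the shift $1/c_\l$ in \eqref{eq:eigvalshift} is $O(1)$ and drops out, and simplify $\sqrt{(1+\l)^2-4\l}=1-\l$ since $\l<1$. (Your verbal labelling of the upper/lower signs in $\mp$ is swapped, but the computation you attach to each $j$ is the correct one, and your alternative argument via entrywise convergence of $k^{-1}\tilde{M}_{\l,k}$ to $\mathrm{diag}(\l^{-1},1)$ is also sound.)
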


As a corollary to the lemmas above, we state the following proposition.
\begin{prop}\label{prop:collectmono} Let $k\in \Nz$ and let $\mu_{k,1}(\l)$ and $\mu_{k,2}(\l)$ be the eigenvalues of the matrix $M_{\l,k}$ defined in Lemma \ref{invariance}. The following statements are satisfied: 
\begin{itemize}
\item both eigenvalues for $k=0$ are negative
\begin{equation}\label{eq:eigk0neg}
\mu_{0,1}(\l) < \mu_{0,2}(\l) < 0 \quad \text{for all} \quad \l\in (0,1);
\end{equation}

\item for $k=1$, the eigenvalues are equal to
	\begin{equation}\label{eq:eigk1}
	\m_{1,1}(\l) = -\frac{1}{c_\l} , \qquad \m_{1,2}(\l) = 0;
	\end{equation} 

\item for every $k\geq 2$, the second eigenvalue
\begin{equation}\label{eq:eig2pos}
\mu_{k,2}(\l) > 0 \quad \text{for all} \quad \l\in (0,1);
\end{equation}

\item for every $k\geq 2$, there exists a unique value $\l_k^*\in (0,1)$ such that the first eigenvalue
\begin{equation}\label{eq:eig1cross}
\mu_{k,1}(\l^*_k)=0.
\end{equation}
Moreover, the sequence $\{\l_k^*\}_{k=2}^{\infty}$ is strictly increasing with $\lim_{k\to \infty}\l_k^* =1$.
\end{itemize}
\begin{proof}

In \eqref{eq:k1evals} we calculated that $\mu_{1,2} \equiv 0$, hence by Lemma \ref{eigenvalsincrease} we have that for $k\geq 2$
\[
\mu_{k,2} > \mu_{1,2} \equiv 0 > \mu_{0,2} > \mu_{0,1}.
\]
so that we show both \eqref{eq:eigk0neg} and \eqref{eq:eig2pos}. Equation \eqref{eq:eigk1} is \eqref{eq:k1evals} reproduced here for the sake of completeness. Only the last bullet point remains to be established.

According to Lemma \ref{eigenvalsdecrease}, for $k\geq 2$ the first branch $\mu_{k,1}(\l)$ is strictly decreasing in $\l$ and
\[
\lim_{\l \to 0} \mu_{k,1}(\l)= k-1 > 0 \quad \text{while}\quad \lim_{\l \to 1} \mu_{k,1}(\l) = -\infty.
\]
Thus, when $k\geq 2$, $\mu_{k,1}(\l)$ has a unique zero $\l=\l_k^*$ in $ (0,1)$, where it changes sign from positive to negative. Since the $k$-monotonicity Lemma \ref{eigenvalsincrease} implies that  
$$
\mu_{k+1,1}(\l_k^*)>\mu_{k,1}(\l_k^*) = 0
$$
we must have $\l_{k+1}^*>\l_k^*$,  so that the sequence of zeros $\{\l_k^*\}_{k=2}^{\infty}$ is strictly increasing. Denote its limit by $l=\lim_{k\to\infty} \l_k^*$. Obviously, $\l_k^*\leq l\leq 1$ for all $k\geq 2$. If it were the case that $l<1$, we would have by the asymptotic behaviour of $\mu_{k,1}(\l)$, established in Lemma \ref{eigenvalsasymp}, that for any large enough $k$, $\mu_{k,1}(l)/k>\frac{1}{2}$. But then the zero $\l_k^*$ of $ \mu_{k,1}(\l)$ would have to be greater than $l$, which is a contradiction. Hence, $l=1$.
\end{proof}
\end{prop}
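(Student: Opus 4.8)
The plan is to treat Proposition~\ref{prop:collectmono} as a bookkeeping corollary that assembles the monotonicity information gathered in Lemmas~\ref{eigenvalsdecrease}, \ref{eigenvalsincrease} and \ref{eigenvalsasymp}, together with the explicit $k=1$ computation of Remark~\ref{rem:k1evals}. The single fact that drives everything is that the second eigenvalue branch at $k=1$ is identically zero, $\m_{1,2}(\l)\equiv 0$.

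First I would record the chain of inequalities. Since the symmetric matrices $M_{\l,k}$ are never scalar (Remark~\ref{rem:analytic}), we have $\m_{k,1}(\l)<\m_{k,2}(\l)$ for every $k$ and $\l$; combining this with the strict $k$-monotonicity of both branches (Lemma~\ref{eigenvalsincrease}) and with $\m_{1,2}(\l)\equiv 0$ gives, for all $\l\in(0,1)$,
\[
\m_{0,1}(\l)<\m_{0,2}(\l)<\m_{1,2}(\l)=0<\m_{k,2}(\l)\quad\text{for every }k\geq 2,
\]
which is exactly \eqref{eq:eigk0neg} and \eqref{eq:eig2pos}; equation \eqref{eq:eigk1} is simply \eqref{eq:k1evals} restated.

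For the last bullet I would invoke Lemma~\ref{eigenvalsdecrease}: for each fixed $k\geq 2$ the branch $\m_{k,1}(\l)$ is continuous and strictly decreasing on $(0,1)$, with $\lim_{\l\to0}\m_{k,1}(\l)=k-1>0$ and $\lim_{\l\to1}\m_{k,1}(\l)=-\infty$, so by the intermediate value theorem it has a unique zero $\l_k^*\in(0,1)$, crossing from positive to negative there. To see that $\{\l_k^*\}$ is strictly increasing, apply the $k$-monotonicity of Lemma~\ref{eigenvalsincrease} at the point $\l_k^*$: $\m_{k+1,1}(\l_k^*)>\m_{k,1}(\l_k^*)=0$, and since $\m_{k+1,1}$ is strictly decreasing with its unique zero at $\l_{k+1}^*$, positivity at $\l_k^*$ forces $\l_{k+1}^*>\l_k^*$. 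Hence the sequence increases and, being bounded above by $1$, converges to some $l\in(0,1]$.

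The only step with any content is ruling out $l<1$, and here the asymptotics of Lemma~\ref{eigenvalsasymp} are exactly what is needed. Suppose $l<1$. Then $\l_k^*\leq l$ for all $k$, so by the $\l$-monotonicity of $\m_{k,1}$ we get $\m_{k,1}(l)\leq\m_{k,1}(\l_k^*)=0$ for every $k\geq 2$; on the other hand Lemma~\ref{eigenvalsasymp} gives $\m_{k,1}(l)/k\to 1$, so $\m_{k,1}(l)>0$ for all large $k$ --- a contradiction. Therefore $l=1$, completing the proof. I do not anticipate a genuine obstacle; the only mild care required is in pinning down, via the strict $\l$-monotonicity, that $k$-monotonicity of the eigenvalue evaluated at the moving point $\l_k^*$ really translates into monotonicity of the zeros, and then in applying the asymptotics at the fixed point $l$ to close the limit argument.
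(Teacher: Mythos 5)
Your proposal is correct and follows essentially the same route as the paper: the same chain of inequalities via $\m_{1,2}\equiv 0$ and the $k$-monotonicity of Lemma~\ref{eigenvalsincrease}, the same use of Lemma~\ref{eigenvalsdecrease} for the unique sign change, and the same asymptotic contradiction from Lemma~\ref{eigenvalsasymp} to force $\lim_k\l_k^*=1$. The only cosmetic difference is that you phrase the final contradiction as $\m_{k,1}(l)\leq 0$ versus the paper's ``the zero would exceed $l$,'' which is the same argument.
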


\section{The proof of Theorem \ref{teo2}}\label{proofcomp}

We now turn to the proof of Theorem \ref{teo2}. Following the discussion given in Section \ref{prelim}, it will be necessary to specialize to functions that are invariant under the action of a subgroup $G$ of the orthogonal group $O(n)$ satisfying (P1)-(P2), stated in Section \ref{prelim}. Recall that $C_{G}^{k,\a}(\S^{n-1})$ denotes the H\"older space of $G$-invariant functions. 

We begin by observing that the operator $F_\l$ defined in \eqref{eq:Flambdapolar} restricts to the $G$-invariant function spaces $(C_{G}^{k,\a}(\S^{n-1}))^2$ and, therefore, so does its linearization $L_\l$.

\begin{lemma}\label{opsrstc}
	The nonlinear operator $F_\l$ defined in \eqref{eq:Flambdapolar} and its linearization $L_\l=D_{\vec{v}}F_\l(\vec{0})$ have well defined restrictions
	$$ \begin{array}{rccc}
		F_\l  : & U & \to & \left( C_{G}^{1,\a}(\S^{n-1}) \right)^2 , \\          
		L_\l  : & \left( C_{G}^{2,\a}(\S^{n-1}) \right)^2 & \to & \left( C_{G}^{1,\a}(\S^{n-1}) \right)^2 ,
	\end{array}	$$
	where $U \subseteq (C_{G}^{2,\a}(\S^{n-1}))^2$ is a sufficiently small neighbourhood of $\vec{0}$. 
	
	\begin{proof}
		We just have to explain why $F_\l(\vec{v}) \in (C_{G}^{1,\a}(\S^{n-1}))^2$ if $\vec{v} \in U \subseteq (C_{G}^{2,\a}(\S^{n-1}))^2$. Clearly, if $\vec{v}$ is $G$-invariant, then so is the pull-back metric $g=g(\vec{v}) = \F^* g_0$ on $\O_\l$, where $\F$ is the diffeomorphism defined in \eqref{eq:diffeo}. Hence, by unique solvability, the solution $u^*_\l(\vec{v})\in C^{2,\a}(\overline{\O}_\l)$ of the Dirichlet problem \eqref{eq:pullbackprob} is also $G$-invariant, and we confirm that $F_\l(\vec{v})$ belongs to $(C^{1,\a}_G(\S^{n-1}))^2$, indeed.
	\end{proof}
\end{lemma}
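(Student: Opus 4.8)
The plan is to prove Lemma \ref{opsrstc} by reducing the claim to the single nontrivial point: that the Dirichlet problem \eqref{eq:pullbackprob} in $\O_\l$ with the $G$-invariant metric $g = g(\vec{v}) = \F^*g_0$ preserves $G$-invariance. Everything else follows by chasing definitions, so I would organize the argument in three short steps.

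First I would observe that the diffeomorphism $\F$ in \eqref{eq:diffeo} is $G$-equivariant whenever $\vec{v} = (v_1,v_2)$ consists of $G$-invariant functions. Indeed, $G \leqslant O(n)$ acts on $\R^n \setminus \{0\} \cong (0,\infty)\times \S^{n-1}$ only on the $\S^{n-1}$-factor, and $\F(r,\th) = \big((1+\eta_1(r)v_1(\th)+\eta_2(r)v_2(\th))r,\th\big)$ depends on $\th$ only through $v_1(\th)$ and $v_2(\th)$; since $v_j\circ g = v_j$ for all $g\in G$, we get $\F\circ g = g\circ \F$. Hence the pull-back metric $g(\vec{v}) = \F^*g_0$ of the Euclidean metric is $G$-invariant: $g^*(\F^*g_0) = \F^*(g^*g_0) = \F^*g_0$, using that the Euclidean metric $g_0$ is $O(n)$-invariant. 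In particular, the Laplace–Beltrami operator $\D_{g(\vec{v})}$ commutes with the $G$-action, and the boundary data $u = 0$ on $\G_1$, $u = a_\l$ on $\G_\l$ are constant, hence trivially $G$-invariant.

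Second, I would invoke uniqueness of solutions to the Dirichlet problem \eqref{eq:pullbackprob}. If $u = u^*_\l(\vec{v})$ is the unique solution and $g\in G$, then $u\circ g$ solves the same problem: $-\D_{g(\vec{v})}(u\circ g) = (-\D_{g(\vec{v})}u)\circ g = 1$, and the boundary conditions are preserved since they are $G$-invariant. By uniqueness, $u\circ g = u$, so $u^*_\l(\vec{v})$ is $G$-invariant. Consequently its normal derivative $\p_{\n^*}u^*_\l(\vec{v})$ along $\p\O_\l$ is $G$-invariant as well (the inner unit normal field for a $G$-invariant metric is $G$-invariant), and likewise $\p_{\n}u_\l$ is radial hence $G$-invariant. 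Therefore each component of $F_\l(\vec{v})$ in \eqref{eq:Flambdapolar} is $G$-invariant, i.e. $F_\l(\vec{v})\in (C^{1,\a}_G(\S^{n-1}))^2$.

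Third, the statement for $L_\l$ is then immediate: since $F_\l$ maps a neighbourhood of $\vec{0}$ in $(C^{2,\a}_G(\S^{n-1}))^2$ into $(C^{1,\a}_G(\S^{n-1}))^2$ and these are closed subspaces of $(C^{2,\a}(\S^{n-1}))^2$ and $(C^{1,\a}(\S^{n-1}))^2$ respectively, the Fréchet derivative $L_\l = D_\vec{v}F_\l(\vec{0})$ restricts to a bounded linear map between the $G$-invariant subspaces. (Alternatively, one reads this off the explicit formula \eqref{eq:defLlambda}: if $w_1,w_2$ are $G$-invariant then the harmonic extension $\f_\vec{w}$ is $G$-invariant by uniqueness of the Euclidean Dirichlet problem on $\O_\l$, so $\p_\n\f_\vec{w}$ and the zeroth-order terms are $G$-invariant.) I do not anticipate a genuine obstacle here; the only point requiring care is articulating the $G$-equivariance of $\F$ and the $G$-invariance of $g(\vec{v})$ cleanly, which is what makes the uniqueness argument applicable — the rest is bookkeeping about which function spaces are closed under the group action.
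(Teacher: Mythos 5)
Your proposal is correct and follows essentially the same route as the paper: $G$-invariance of $\vec{v}$ makes the pull-back metric $\F^*g_0$ $G$-invariant, uniqueness for the Dirichlet problem \eqref{eq:pullbackprob} then forces $u_\l^*(\vec{v})$ (hence its normal derivative and $F_\l(\vec{v})$) to be $G$-invariant, and the statement for $L_\l$ follows since it is the derivative of a map between the $G$-invariant (closed) subspaces. You simply spell out the equivariance of $\F$ and the symmetrization argument that the paper leaves implicit.
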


Recall that properties (P1)-(P2) of $G$ say that the $G$-invariant spherical harmonics are only the ones of degree $\{i_k\}_{k\in \Nz}$, with $i_0 =0$ and $i_1\geq 2$, and for each $k\in \Nz$, they form a one-dimensional subspace -- spanned by the unique $G$-invariant spherical harmonic $Y_k$ of degree $i_k$ and unit $L^2(\S^{n-1})$ norm. For each $k\in \Nz$, let $W_k= \text{Span}\{(Y_k, 0), (0,Y_k)\}$, let $\mathcal{B}_k=\{ \vec{e}_1, \vec{e}_2 \}$ be the orthonormal basis for $W_k$, defined in \eqref{eq:basis}, and let $M_{\l, i_k}$ be the matrix of $L_{\l}|_{W_k}$ with respect to $\mathcal{B}_k$. Also, recall that in Remark \ref{rem:analytic} we chose the eigenvector
\[
v_j(\l, i_k)=(a_{k,j}, b_{k,j}) , \quad k\in \Nz, \; j=1,2, \quad \text{where} \quad a_{k,j}>0 \quad \text{and}\quad  a_{k,j}^2 + b_{k,j}^2 = 1,
\]
to span the eigenspace of $M_{\l, i_k}$, associated with  $\mu_{i_k, j}(\l)$. The corresponding eigenvector of $L_\l$ is
\begin{equation}\label{eq:eigvector}
\vec{z}_{k,j}:= a_{k,j}\vec{e}_1 + b_{k,j}\vec{e}_2 , \qquad k\in \Nz, \quad j=1,2, \quad \text{and its norm} \quad \|\vec{z}_{k,j}\|_\l = 1.
\end{equation}

\begin{remark}\label{orthobasis}
    The sequence of eigenvectors $\{ \vec{z}_{k,j}(\l)\}_{k\in \Nz, j=1,2}$ of $L_\l$ forms an orthonormal basis for the Hilbert space $L_{G}^2(\S^{n-1}) \times L_{G}^2(\S^{n-1})$,  endowed with the inner product $\langle \cdot, \cdot \rangle_\l$ defined in \eqref{innprod}, which is equivalent to the usual one.
\end{remark}

Since $i_1\geq 2$, Proposition \ref{prop:collectmono} says that the eigenvalues $\m_{i_k,1}(\l)$, $k\in \N$, cross $0$ at values $\l_k := \l_{i_k}^* \in (0,1)$, with $\l_k\nearrow 1$, while the eigenvalues $\m_{i_k,2}(\l)>0$. In addition, the eigenvalues $\m_{i_0,1}(\l)$ and $\m_{i_0,2}(\l)$ are strictly negative for all $\l \in (0,1)$.  Theorem \ref{teo2} will follow after a direct application of the Crandall-Rabinowitz Theorem (see Appendix, Theorem \ref{crt}) to the smooth family of nonlinear operators $F_\l: U \to (C_{G}^{1,\a}(\S^{n-1}))^2$ from Lemma \ref{opsrstc}, and the following proposition puts us exactly in the framework of that theorem. In order to simplify notation, we will denote  \[\vec{z}_k := \vec{z}_{k,1} \quad \text{for} \quad k\in \N,\] 
where $\vec{z}_{k,1}$ is defined in \eqref{eq:eigvector}.

\begin{prop}\label{verificationLlambda}
    For every $k \in \N$, the linear operator $L_{\l_k}: (C_{G}^{2,\a}(\S^{n-1}))^2 \to (C_{G}^{1,\a}(\S^{n-1}))^2$ in Lemma \ref{opsrstc} has kernel of dimension 1 spanned by $\vec{z}_k$, closed image of co-dimension 1 given by
    \begin{equation}\label{imLlambda}
        \im L_{\l_k} = \left \{ \vec{w} \in \left( C_{G}^{1,\a}(\S^{n-1}) \right)^2 : \langle \vec{w}, \vec{z}_k \rangle_{\l_k} = 0 \right \} ,
    \end{equation}
    and satisfies
    \begin{equation}\label{transLlambda}
        \left. \frac{\p}{\p\l} L_\l \right|_{\l=\l_k} (\vec{z}_k) \notin \im L_{\l_k} .
    \end{equation}
    
    \begin{proof}
        For a proof of \eqref{imLlambda}, see \cite[Proposition 5.1]{fall2018serrin}, as it follows almost verbatim. For the sake of completeness, we shall provide some of the details. Our first observation is that the Sobolev space $H^s(\S^{n-1})$ can be characterized as the subspace of funtions  $v \in L^2(\S^{n-1})$ such that
        $$ \sum_{j=0}^{\infty} (1+j^2)^s \| P_j (v) \|_{L^2}^2 < \infty ,$$
        where $P_j$ denotes the $L^2$-orthogonal projection on the subspace generated by the spherical harmonics of degree $j$, and we denote $H_{G}^s(\S^{n-1}) := H^s(\S^{n-1}) \cap L_{G}^2(\S^{n-1})$. As stated in Remark \ref{orthobasis}, the sequence $\{ \vec{z}_{k,j} \}$ is an orthonormal basis for $(L_{G}^2(\S^{n-1}))^2$ with inner product $\langle \cdot, \cdot \rangle_\l$, and so we can define the map  $(H_{G}^2(\S^{n-1}))^2 \to (H_{G}^1(\S^{n-1}))^2$
        \begin{equation}\label{eq:Llambdaextension}
            \vec{w} = \sum_{\ell=0}^\infty ( a_{\ell,1} \vec{z}_{\ell,1} + a_{\ell,2} \vec{z}_{\ell,2} ) \mapsto \sum_{\ell=0}^\infty ( a_{\ell,1} \m_{i_\ell,1}(\l) \vec{z}_{\ell,1} + a_{\ell,2} \m_{i_\ell,2}(\l) \vec{z}_{\ell,2} ) .
        \end{equation}
         Due to the asymptotic behavior of the sequences $\{ \m_{m,j}(\l) \}_{m=1}^\infty$ proved in Lemma \ref{eigenvalsasymp}, we can see that \eqref{eq:Llambdaextension} defines a continuous mapping. Since it agrees with $L_\l$ on finite linear combinations of $\{ \vec{z}_{k,j} \}$, which are dense both in $(C_{G}^{2,\a}(\S^{n-1}))^2$ and $(H_{G}^2(\S^{n-1}))^2$, \eqref{eq:Llambdaextension} defines an extension of $L_\l$. Moreover, for $\l=\l_k$ the map
        \begin{equation*}
            \vec{w} = \sum_{\ell=0}^\infty ( b_{\ell,1} \vec{z}_{\ell,1} + b_{\ell,2} \vec{z}_{\ell,2} ) \mapsto \sum_{\substack{\ell=0 \\ \ell \neq k}}^{\infty} \left( \frac{b_{\ell,1}}{\m_{i_\ell,1}(\l_k)} \vec{z}_{\ell,1} + \frac{b_{\ell,2}}{\m_{i_\ell,2}(\l_k)} \vec{z}_{\ell,2} \right) + \frac{b_{k,2}}{\m_{i_k,2}(\l_k)} \vec{z}_{k,2}
        \end{equation*}
        is a right inverse for $L_{\l_k}$, which is also continuous by Lemma \ref{eigenvalsasymp}. Thus, $L_{\l_k}$ defines an isomorphism between the spaces
        \begin{align*}
            \mathfrak{X}_k & := \left\{ \vec{v} \in \left( H_{G}^2(\S^{n-1}) \right)^2 : \langle \vec{v}, \vec{z}_k \rangle_{\l_k} = 0 \right\} ,  \\
            \mathfrak{Y}_k & := \left\{ \vec{v} \in \left( H_{G}^1(\S^{n-1}) \right)^2 : \langle \vec{v}, \vec{z}_k \rangle_{\l_k} = 0 \right\} .
        \end{align*}
        It follows that $L_{\l_k}: \mathfrak{X}_k \cap (C_{G}^{2,\a}(\S^{n-1}))^2 \to \mathfrak{Y}_k \cap (C_{G}^{1,\a}(\S^{n-1}))^2$ is a well defined, injective mapping. It only remains to prove its surjectivity. For that purpose, let $\vec{y} \in \mathfrak{Y}_k \cap (C_{G}^{1,\a}(\S^{n-1}))^2$. Then there exists $\vec{w} \in \mathfrak{X}_k$ such that $L_{\l_k}(\vec{w}) = \vec{y}$. The latter means that the weak solution $\f \in H^2(\O_\l)$ to
        \begin{equation*}\begin{cases}
            \D \f = 0 & \text{in } \O_\l , \\
            \f = w_1 & \text{on } \G_\l , \\
            \f = w_2 & \text{on } \G_1 ,
            \end{cases}
        \end{equation*}
        satisfies
                \begin{align*}
            -\frac{\p\f}{\p\n} + \frac{w_1}{c_\l} \frac{\p_r^2 u_\l}{\p r^2} & = y_1 \quad \text{on } \G_\l , \\
            -\frac{\p\f}{\p\n} + \frac{w_2}{c_\l} \frac{\p_r^2 u_\l}{\p r^2} & = y_2  \quad \text{on } \G_1 .
        \end{align*}
        
        From here  one argues that $\f \in W^{2,p}(\O_\l)$ for every $p \in (1,\infty)$ so that by Sobolev embedding $\f \in C^{1,\a}(\overline{\O}_\l)$, for all $0<\a<1$. But then $\phi$ is also weak solution to  the Neumann problem
              \begin{equation*}\begin{cases}
            \D \f = 0 & \text{in } \O_\l , \\
             \frac{\p\f}{\p\n} =\frac{w_1}{c_\l} \frac{\p_r^2 u_\l}{\p r^2}- y_1 & \text{on } \G_\l , \\
            \frac{\p\f}{\p\n} = \frac{w_2}{c_\l} \frac{\p_r^2 u_\l}{\p r^2} - y_2 & \text{on } \G_1 ,
 \end{cases}
        \end{equation*}
  with Neumann conditions in $C^{1,\a}$. Hence, $\phi \in C^{2,\a}(\overline{\O}_\l)$,    
         which implies $\vec{w} \in (C_{G}^{2,\a}(\S^{n-1}))^2$.
        
        Therefore, $L_{\l_k}: \mathfrak{X}_k \cap (C_{G}^{2,\a}(\S^{n-1}))^2 \to \mathfrak{Y}_k \cap (C_{G}^{1,\a}(\S^{n-1}))^2$ is also an isomorphism. This readily implies equality \eqref{imLlambda} and that $\ker L_{\l_k}$ is spanned by $\vec{z}_k$. 
        The tranversality condition \eqref{transLlambda} follows from the fact that
        $$ \frac{\p}{\p\l} L_\l \bigg|_{\l=\l_k} (\vec{z}_k) = \mu_{i_k,1}'(\l_k) \vec{z}_k .$$
        which by Lemma \ref{eigenvalsdecrease} is a non-zero scalar multiple of $\vec{z}_k$.
    \end{proof}
\end{prop}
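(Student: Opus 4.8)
The plan is to read off everything from the diagonalization of $L_\l$ on the orthonormal basis $\{\vec{z}_{\ell,j}\}_{\ell\in\Nz,\,j=1,2}$ of $(L^2_G(\S^{n-1}))^2$ supplied by Remark~\ref{orthobasis}, where $L_\l\vec{z}_{\ell,j}=\m_{i_\ell,j}(\l)\vec{z}_{\ell,j}$. For the kernel, note that an element of $\ker L_{\l_k}$ must be a linear combination of those $\vec{z}_{\ell,j}$ with $\m_{i_\ell,j}(\l_k)=0$. Proposition~\ref{prop:collectmono} leaves exactly one such index: $\m_{i_0,1},\m_{i_0,2}<0$ on all of $(0,1)$; $\m_{i_\ell,2}>0$ for $\ell\geq1$ (since $i_\ell\geq i_1\geq2$); and $\m_{i_\ell,1}$ vanishes only at $\l_\ell=\l^*_{i_\ell}$, while $\ell\mapsto\l_\ell$ is strictly increasing, so $\l_\ell\neq\l_k$ unless $\ell=k$. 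Hence $\ker L_{\l_k}=\R\vec{z}_k$; here simplicity relies on property (P2), which makes $Y_k$, hence the eigenspace of $\m_{i_k,1}(\l_k)$, one-dimensional.

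For the image I would argue as in \cite[Proposition~5.1]{fall2018serrin}. Using the Fourier characterization of $H^s(\S^{n-1})$ and the first-order growth $\m_{i_\ell,1}(\l)/i_\ell\to1$, $\m_{i_\ell,2}(\l)/i_\ell\to1/\l$ from Lemma~\ref{eigenvalsasymp}, the diagonal rule $\vec{z}_{\ell,j}\mapsto\m_{i_\ell,j}(\l)\vec{z}_{\ell,j}$ extends $L_\l$ to a bounded operator $(H^2_G(\S^{n-1}))^2\to(H^1_G(\S^{n-1}))^2$, and at $\l=\l_k$ division by $\m_{i_\ell,j}(\l_k)$ over the indices $(\ell,j)\neq(k,1)$ — where these eigenvalues are nonzero and bounded below by a positive multiple of $1+i_\ell$ — produces a bounded right inverse. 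Consequently $L_{\l_k}$ is an isomorphism of $\mathfrak{X}_k$ onto $\mathfrak{Y}_k$, the $\langle\cdot,\vec{z}_k\rangle_{\l_k}$-orthogonal subspaces of $(H^2_G(\S^{n-1}))^2$ and $(H^1_G(\S^{n-1}))^2$.

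It then remains to transfer this isomorphism to the Hölder scale. Given $\vec{y}\in\mathfrak{Y}_k\cap(C^{1,\a}_G(\S^{n-1}))^2$, pick $\vec{w}\in\mathfrak{X}_k$ with $L_{\l_k}\vec{w}=\vec{y}$ and let $\f$ be the harmonic extension of $\vec{w}$ to $\overline{\O}_{\l_k}$; reading \eqref{eq:defLlambda}, $\f$ solves a Neumann problem on $\O_{\l_k}$ with boundary data in $C^{1,\a}$, so $W^{2,p}$-estimates followed by Sobolev embedding and Schauder estimates for the Neumann problem give $\f\in C^{2,\a}(\overline{\O}_{\l_k})$, hence $\vec{w}\in(C^{2,\a}_G(\S^{n-1}))^2$. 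Since $\vec{z}_k$ is smooth and $\langle\vec{z}_k,\vec{z}_k\rangle_{\l_k}=1$, one has $(C^{2,\a}_G)^2=\R\vec{z}_k\oplus(\mathfrak{X}_k\cap(C^{2,\a}_G)^2)$ together with $L_{\l_k}\vec{z}_k=0$, so $\im L_{\l_k}=\mathfrak{Y}_k\cap(C^{1,\a}_G)^2$, which is exactly the closed codimension-one subspace in \eqref{imLlambda}, being the kernel of the bounded functional $\vec{w}\mapsto\langle\vec{w},\vec{z}_k\rangle_{\l_k}$ on $(C^{1,\a}_G(\S^{n-1}))^2$.

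For the transversality \eqref{transLlambda}, differentiate $L_\l\vec{z}_{k,1}(\l)=\m_{i_k,1}(\l)\vec{z}_{k,1}(\l)$ at $\l=\l_k$, using that $\l\mapsto\vec{z}_{k,1}(\l)$ is smooth and $\m_{i_k,1}(\l_k)=0$; this gives $\p_\l L_\l|_{\l=\l_k}(\vec{z}_k)=\m_{i_k,1}'(\l_k)\vec{z}_k-L_{\l_k}(\p_\l\vec{z}_{k,1}(\l_k))$. Pairing with $\vec{z}_k$ in $\langle\cdot,\cdot\rangle_{\l_k}$ and using that $L_{\l_k}$ is formally self-adjoint with $L_{\l_k}\vec{z}_k=0$ (Remark~\ref{rem:SA}), the last term drops out and $\langle\p_\l L_\l|_{\l=\l_k}(\vec{z}_k),\vec{z}_k\rangle_{\l_k}=\m_{i_k,1}'(\l_k)\neq0$ by the strict $\l$-monotonicity of Lemma~\ref{eigenvalsdecrease}; thus $\p_\l L_\l|_{\l=\l_k}(\vec{z}_k)\notin\im L_{\l_k}$. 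I expect the main obstacle to be the middle part — establishing the Fredholm picture (closed, codimension-one image with the explicit orthogonality description) for this nonlocal, Dirichlet-to-Neumann-type operator on a product of spheres — and within it the single delicate fact that both the Sobolev-space extension of $L_\l$ and its right inverse are bounded, which rests entirely on the linear growth of the eigenvalues proved in Lemma~\ref{eigenvalsasymp}; the elliptic bootstrap back to the Hölder spaces and the spectral bookkeeping for the kernel and the transversality are then routine.
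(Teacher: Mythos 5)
Your proposal is correct and follows essentially the same route as the paper: the Fourier-side diagonal extension of $L_\l$ and its right inverse, bounded thanks to Lemma \ref{eigenvalsasymp}, the elliptic bootstrap back to the H\"older scale to get the isomorphism $\mathfrak{X}_k\cap(C^{2,\a}_G)^2\to\mathfrak{Y}_k\cap(C^{1,\a}_G)^2$, and the spectral bookkeeping from Proposition \ref{prop:collectmono}. Your transversality step is in fact slightly more careful than the paper's (which simply asserts $\p_\l L_\l|_{\l=\l_k}(\vec{z}_k)=\m_{i_k,1}'(\l_k)\vec{z}_k$), since you retain the extra term $-L_{\l_k}(\p_\l\vec{z}_{k,1}(\l_k))$ and discard it by pairing with $\vec{z}_k$ and using the formal self-adjointness of $L_{\l_k}$.
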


\begin{proof}[Proof of Theorem \ref{teo2}]
Let $I_k\subseteq (0,1)$ be a small interval around each critical value $\l_k\nearrow 1$. Let $U_k\subseteq (C_{G}^{2,\a}(\S^{n-1}))^2$ be an appropriately small neighbourhood of $\vec{0}$, such that for all $\l\in I_k$, $F_\l$ is well defined on $U_k$  via \eqref{eq:Flambdapolar}. Then the operator 
    \[F: U_k \times I_k \to  Y:=(C_{G}^{1,\a}(\S^{n-1}))^2, \qquad F(\vec{v}, \l):= F_\l(\vec{v}) ,\] 
is in $C^{\infty}(U_k \times I_k , Y)$, and by Proposition \ref{verificationLlambda}, we can apply the Crandall-Rabinowitz Bifurcation Theorem to get a smooth curve
    $$\begin{array}{ccc}
    	(-\e,\e) & \to & \left( C_{G}^{2,\a}(\S^{n-1}) \right)^2 \times I_k \\
        s & \mapsto & (\vec{w}(s),\l(s))
    \end{array}$$
    such that
    \begin{itemize}
        \item $\vec{w}(0) =\vec{0}$, $\l(0) = \l_k$, and $\langle \vec{w}(s), \vec{z}_k \rangle_{\l_k} =0$; 
        \item $F_{\l(s)}(\vec{v}(s)) = 0$, where $\vec{v}(s) = s(\vec{z}_k + \vec{w}(s))$.
    \end{itemize}
    Then, for every $s \in (-\e,\e)$, the solution $u_{\l(s)}(\vec{v}(s)) \in C_{G}^{2,\a}(\overline{\O}_{\l(s)}^{\vec{v}(s)})$  to the Dirichlet problem \eqref{eq:prob2} also solves the overdetermined problem \eqref{eq:prob1}. 
\end{proof}

\section{Proof of Corollary \ref{coro:cheeger}}\label{sec:coro}

Let $\O$ be any one of the domains constucted in Theorem \ref{teo1} and let $u\in C^{\infty}(\overline{\O})$ be the solution of the corresponding overdetermined problem
\begin{equation}\label{eq:prob1.1}
\begin{cases}
    -\D u = 1   & \text{ in } \O , \\
    u = 0 & \text{ on } \p\O_0 , \\
    u = a & \text{ on } \p\O_1 , \\
    \p_\n u = c &\text { on } \p \O.
\end{cases}
\end{equation}
for some constants $a>0$ and $c>0$.

\begin{proof}[Proof of Corollary \ref{coro:cheeger}]
First, let us show that 
\begin{equation}\label{eq:gradientbound}
|\nabla u| < c \quad \text{in} \quad \O.
\end{equation}
Indeed, since $-\D u =1$, 
\[
\Delta |\nabla u|^2 = 2 |D^2 u|^2 + 2 \nabla u \cdot \nabla (\D u) =  2 |D^2 u|^2 >0,
\]
so that the function $|\nabla u|^2$ is subharmonic in $\O$ and, by the strict maximum principle 
\[
|\nabla u|^2 (x) < \sup_{\de \O}|\nabla u|^2 = c^2 \quad \text{for all}\quad x\in \O.
\]
Now let $E\subseteq \O$ be any subset of finite perimeter and let $\de^* E\subseteq \de E$ be its reduced boundary -- where one can define a measure-theoretic inner unit normal $\nu_E$. By De Giorgi's theorem (see \cite{giusti1984}), the $(n-1)$-Haudorff dimensional measure $H^{n-1}(\de^* E) = P(E)$ and we can apply the version of the Divergence Theorem to obtain
\begin{equation}\label{eq:div}
|E| = \int_E (-\D u) \, dx = \int_{\de^* E} \nabla u \cdot \nu_E \; d H^{n-1} \leq c H^{n-1}(\de^* E) = c P(E),
\end{equation}
where we used \eqref{eq:gradientbound} in the inequality above. Hence, $P(E)/E \geq 1/c$ and equality holds if and only if $E=\O$.

\end{proof}

\section{Appendix}

We give here a version of the Crandall-Rabinowitz Theorem, equivalent to the one stated on \cite{crandall1971bifurcation}, which is the principal tool behind Theorem \ref{teo2}. For a proof of the theorem and applications, we refer the reader to \cite{crandall1971bifurcation,kielhofer2011bifurcation}.

\begin{theorem}[Crandall-Rabinowitz]\label{crt}
    Let $X$ and $Y$ be Banach spaces, and let $U \subset X$ and $I \subset \R$ be open sets, such that $0 \in U$. Let $F \in C^2 (U \times I , Y)$ and assume
    
    \begin{enumerate}
        \item $F(0,\l) = 0$ for all $\l \in I$;
        \item $\ker \p_x F(0,\l_0)$ is a dimension $1$ subspace and $\im \p_x F(0,\l_0)$ is a closed co-dimension $1$ subspace for some $\l_0 \in I$;
        \item $\p_\l \p_x F(0,\l_0)(x_0) \notin \im \p_x F(0,\l_0)$, where $x_0 \in X$ spans $\ker \p_x F(0,\l_0)$.
    \end{enumerate}
    
    Write $X = \hat{X} \oplus \R x_0$. Then there exists a $C^1$ curve
    $$ (-\e,\e) \to \hat{X} \times \R \;, \qquad s \mapsto (x(s),\l(s)) $$
    such that
    
    \begin{itemize}
        \item $x(0) = 0$ and $\l(0) = \l_0$;
        \item $s(x_0+x(s)) \in U$ and $\l(s) \in I$;
        \item $F(s(x_0+x(s)),\l(s)) = 0$.
    \end{itemize}
    
    Moreover, there is a neighbourhood of $(0,\l_0)$ such that $\{ (s(x_0+x(s)),\l(s)) : s \in (-\e,\e) \}$ are the only solutions bifurcating from $\{ (0,\l) : \l \in I \}$.
\end{theorem}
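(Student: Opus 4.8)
The plan is to prove this classical bifurcation result by a \emph{Lyapunov--Schmidt reduction}: cut the abstract equation $F(x,\l)=0$ near $(0,\l_0)$ down to a single scalar equation in $(s,\l)$ and then solve that equation with the implicit function theorem (IFT).

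First I would fix splittings adapted to the Fredholm hypothesis (2). On the domain side use the given decomposition $X=\hat X\oplus\R x_0$, valid because $\R x_0$ is finite-dimensional, hence complemented; on the target side write $Y=Z\oplus\R y_0$, where $Z:=\im\p_x F(0,\l_0)$ is the closed co-dimension-$1$ image and $y_0\notin Z$, with $P:Y\to Z$ and $Q=\id-P:Y\to\R y_0\cong\R$ the associated bounded projections. Because $F(0,\l)\equiv 0$ by (1), Taylor's formula with integral remainder factors the equation, restricted to the cone around the line $\R x_0$, as
\[
F\big(t(x_0+v),\l\big)=t\,H(t,v,\l),\qquad H(t,v,\l):=\int_0^1\p_x F\big(\tau t(x_0+v),\l\big)(x_0+v)\,d\tau,
\]
with $H$ of class $C^1$ (one derivative below $F$) and $H(0,0,\l_0)=\p_x F(0,\l_0)x_0=0$.

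Next I would split $H=0$ into $PH=0$ and $QH=0$. The \emph{auxiliary equation} $PH(t,v,\l)=0$ has $v$-derivative $P\,\p_x F(0,\l_0)|_{\hat X}$ at the base point, and since $\ker\p_x F(0,\l_0)=\R x_0$ while $\im\p_x F(0,\l_0)=Z$, the map $\p_x F(0,\l_0)|_{\hat X}:\hat X\to Z$ is a continuous bijection, hence an isomorphism by the open mapping theorem; applying $P$ changes nothing since its range lies in $Z=\ker Q$. The IFT then supplies a $C^1$ solution $v=v(t,\l)$ near $(0,\l_0)$ with $v(0,\l_0)=0$. Feeding it into the remaining equation leaves the scalar \emph{bifurcation function} $\Phi(t,\l):=QH(t,v(t,\l),\l)$, which must be made to vanish. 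One checks $\Phi(0,\l_0)=Q\,\p_x F(0,\l_0)x_0=0$, and, differentiating in $\l$ and using $v(0,\l_0)=0$ together with $\p_x F(0,\l_0)(\hat X)=\ker Q$ to kill the $\p_\l v$ contribution,
\[
\p_\l\Phi(0,\l_0)=Q\,\p_\l\p_x F(0,\l_0)(x_0)\neq 0
\]
\emph{precisely} by the transversality hypothesis (3). A second application of the IFT then produces a $C^1$ function $\l=\l(t)$ with $\l(0)=\l_0$ solving $\Phi(t,\l(t))\equiv 0$; setting $s:=t$ and $x(s):=v(s,\l(s))$ gives the asserted $C^1$ curve $s\mapsto\big(s(x_0+x(s)),\l(s)\big)$ with $x(0)=0$. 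For the ``only solutions'' clause I would rerun the reduction without the factorization, writing a general nearby solution as $x=tx_0+w$, solving $PF(tx_0+w,\l)=0$ for $w=w(t,\l)$ (with $w(0,\l)\equiv 0$ forced by $F(0,\l)\equiv 0$), and noting $QF(tx_0+w(t,\l),\l)=t\,\Psi(t,\l)$ with $\p_\l\Psi(0,\l_0)\neq 0$ by the same computation; this exhibits the full local solution set as the trivial branch $t=0$ together with the single bifurcating curve.

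The hard part will be the regularity bookkeeping: the scheme loses exactly one derivative in the factorization $F=tH$ (equivalently, in dividing the reduced equation by $t$), which is why $F\in C^2$ is assumed and why the resulting curve is only $C^1$; checking that $v$ is $C^1$ and that the composite $s\mapsto v(s,\l(s))$ stays $C^1$ up to $s=0$ is the one genuinely delicate point. Everything else is a mechanical double use of the implicit function theorem and the open mapping theorem.
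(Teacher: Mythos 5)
You should note at the outset that the paper does not prove Theorem \ref{crt} at all: it is stated in the Appendix and the proof is delegated to the references \cite{crandall1971bifurcation, kielhofer2011bifurcation}. Your Lyapunov--Schmidt argument is, in substance, the standard proof found in those references (Kielh\"ofer's route), and it is correct: the factorization $F(t(x_0+v),\l)=tH(t,v,\l)$ is legitimate because $F(0,\l)\equiv 0$, the auxiliary equation is solvable by the implicit function theorem since $\partial_x F(0,\l_0)|_{\hat X}:\hat X\to Z$ is an isomorphism, and the transversality hypothesis (3) is exactly what makes $\partial_\l\Phi(0,\l_0)=Q\,\partial_\l\partial_x F(0,\l_0)x_0\neq 0$, so the scalar bifurcation equation yields the $C^1$ curve. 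For comparison, the original Crandall--Rabinowitz proof compresses your two implicit-function-theorem steps into one: it applies the IFT directly to the divided map $G(s,z,\l):=F(s(x_0+z),\l)/s$ (extended by $G(0,z,\l):=\partial_x F(0,\l)(x_0+z)$) on $\R\times\hat X\times I\to Y$, using that $(w,\m)\mapsto \partial_x F(0,\l_0)w+\m\,\partial_\l\partial_x F(0,\l_0)x_0$ is an isomorphism of $\hat X\times\R$ onto $Y$ by (2)--(3); this avoids introducing the projections $P,Q$ but is otherwise the same mechanism, with the same one-derivative loss explaining $C^2\Rightarrow C^1$. A few small points you should tighten in a written version: in defining $H$ you integrate along the segment from $0$ to $t(x_0+v)$, so restrict to a ball around $0$ contained in $U$ (U itself need not be star-shaped); in the uniqueness step you need $\Psi(0,\l_0)=0$ before invoking the IFT, which requires the short computation $\partial_t w(0,\l_0)=0$ obtained by differentiating the auxiliary equation $PF(tx_0+w(t,\l),\l)=0$ in $t$; and you should record that the branch produced there coincides, by local uniqueness of the auxiliary solutions, with the curve $s\mapsto(s(x_0+x(s)),\l(s))$ constructed first, so that the local solution set is precisely the trivial line plus that single curve.
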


\bibliography{OD}

\end{document}